\newtheorem{theorem}{Theorem}[section]
\newtheorem{lemma}[theorem]{Lemma}
\newtheorem{corollary}[theorem]{Corollary}
\theoremstyle{definition}
\newtheorem{definition}[theorem]{Definition}
\newtheorem{example}[theorem]{Example}
\newtheorem{proposition}[theorem]{Proposition}
\theoremstyle{remark}
\newtheorem{remark}[theorem]{Remark}
\numberwithin{equation}{section}
\begin{document}

% \title[short text for running head]{full title}
\title{Topological expansive Lorenz maps with a hole at critical point}

\author[Y.\ Sun]{Yun Sun}
\address[Yun Sun]{Department of Mathematics, South China University of Technology, Guangzhou, 510461, China}
%\curraddr{}
\email{masy2021@mail.scut.edu.cn}
\author[B.\ Li]{Bing Li}
\address[Bing Li]{Department of Mathematics, South China University of Technology, Guangzhou, 510461, China}
%\curraddr{}
\email{scbingli@scut.edu.cn}
\author[Y M.\ Ding]{Yiming Ding}

\address[Yiming Ding]{College of Science, Wuhan University of Science and Technology, Wuhan, 430065, China}
%\curraddr{}
\email{dingym@wust.edu.cn}

\begin{abstract}
Let $f$ be an expansive Lorenz map and $c$ be the critical point. The survivor set is denoted as $S_{f}(H):=\{x\in[0,1]: f^{n}(x)\notin H, \forall n\geq 0\}$, where $H$ is a open subinterval. Here we study the hole $H=(a,b)$ with $a\leq c \leq b$ and $a\neq b $. We show that the case $a=c$ is equvalent to the hole at $0$, the case $b=c$ equals to the hole at $1$. We also obtain that, given an expansive Lorenz map $f$ with a hole $H=(a,b)$ and $S_{f}(H)\nsubseteqq\{0,1\}$, then there exists a Lorenz map $g$ such that $\tilde{S}_{f}(H)\setminus\Omega(g)$ is countable, where $\Omega(g)$ is the Lorenz-shift of $g$ and $\tilde{S}_{f}(H)$ is the symbolic representation of $S_{f}(H)$. Let $a$ be fixed and $b$ varies in $(c,1)$, we also give a complete characterization of the maximal interval $I(b)$ such that for all $\epsilon\in I(b)$, $S_{f}(a,\epsilon)=S_{f}(a,b)$, and $I(b)$ may degenerate to a single point $b$. Moreover, when $f$ has an ergodic acim, we show that the topological entropy function $\lambda_{f}(a):b\mapsto h_{top}(f|S_{f}(a,b))$ is a devil staircase with $a$ being fixed, so is $\lambda_{f}(b)$ if we fix $b$. At the special case $f$ being intermediate $\beta$-transformation, using the Ledrappier-Young formula, we obtain that the Hausdorff dimension function $\eta_{f}(a):b\mapsto \dim_{\mathcal{H}}(S_{f}(a,b))$ is a devil staircase when fixing $a$, so is $\eta_{f}(b)$ if $b$ is fixed. As a result, we extend the devil staircases in \cite{Urbanski1986,kalle2020,Langeveld2023} to expansive Lorenz maps with a hole at critical point.

\par Key words: open dynamical systems; expansive Lorenz maps; devil staircase; kneading invariants; Hausdorff dimension.
\end{abstract}

\maketitle

\section{Introduction}\label{intro1}
The study of dynamical systems with holes, i.e. the characterization of points which do not
fall into certain predetermined sets under iteration by a map, are called open dynamical systems, and were first proposed by Pianigiani and Yorke \cite{Pianigiani1979} in 1979. In recent years open dynamical systems have posed interesting questions both about arithmetic properties of points and their dynamical interpretation (cf.\cite{Sidorov2003,Sidorov2014,glendinning2015,Agarwal2020}). In the general setting, one considers a discrete dynamical system $(X,T)$,  where $X=[0,1]$ and $T: X\rightarrow X$ is continuous with positive topological entropy. Let $H\subset X$ be a connected subinterval, called the hole. We focus on the following survivor set corresponding to hole $H$:
$$ S_{T}(H)=\{x\in X: T^{n}(x)\notin H \  \forall n\geq 0\}=X\setminus\bigcup_{n=0}^{\infty}T^{-n}(H).
$$
It can be seen that the size of $S_{T}(H)$ depends not only on the size of $H$ but also on the position of the hole.
\par There are many works \cite{Agarwal2020,kalle2020,Langeveld2023,Urbanski1986,glendinning2015,clark2016} concerning about the Hausdorff dimension of the survivor set $S_{T}(H)$ with $T$ being piecewise linear transformations. The classic result by Urba${\rm\acute{n}}$ski \cite{Urbanski1986,Urbanski1987} mainly considered $C^{2}$-expanding, orientation-preserving circle maps with a hole $(0,t)$. He proved that for the doubling map $T_{2}$, the Hausdorff dimension of the survivor set $S_{2}(t):=\{x\in [0,1): T_{2}^{n}(x)\notin (0,t) \  \forall n\geq 0\}$ depends continuously on the parameter $t\in[0,1)$. Furthermore, he showed that the dimension function $\eta_{2}:t\mapsto \dim_{\mathcal{H}}S_{2}(t)$ is a devil staircase, and studied its bifurcation set. For the doubling map with an arbitrary hole $(a,b)\subset[0,1)$, Glendinning and Sidorov \cite{glendinning2015} studied the survivor set $S_{2}(a,b):=\{x\in [0,1): T_{2}^{n}(x)\notin (a,b) \  \forall n\geq 0\}$, and determined when $S_{2}(a,b)$ is nonepmty, infinite, or has positive Hausdorff dimension. They proved that when the size of the hole $(a,b)$ is strictly smaller than 0.175092, then $\dim_{\mathcal{H}}(S_{2}(a,b))>0$. Later, Clark \cite{clark2016} partially extended Glendinning and Sidorov's result to the $\beta$-dynamical system $([0,1), T_{\beta})$, where $\beta\in(1,2]$ and $T_{\beta}(x):=\beta x \ ( \bmod \ 1)$.

\begin{figure}[t]
\begin{center}
\includegraphics[width=1.0\textwidth]{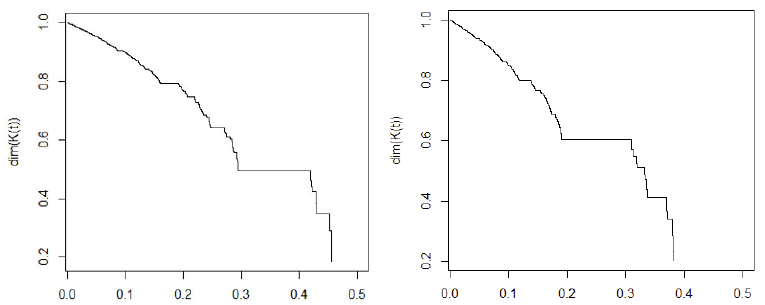}

\caption{Left: $\eta_{\beta}:t\mapsto \dim_{\mathcal{H}}S_{\beta}(t)$ where $\beta$ is tribonacci number. \ Right: $\eta_{\beta,\alpha}:t\mapsto \dim_{\mathcal{H}}S_{\beta,\alpha}(t)$ where $\beta$ is golden mean and $\alpha=1-\beta/2$.}
\label{fig1}
\end{center}
\end{figure}

\par Motivated by the works above, Kalle et al.\cite{kalle2020} considered the survivor set in the $\beta$-dynamical system with a hole at zero. Similar to Urba${\rm\acute{n}}$ski's result on doubling maps, they also determined the Hausdorff dimension of the survivor set $$S_{\beta}(t):=\{x\in [0,1): T_{\beta}^{n}(x)\notin (0,t) \  \forall n\geq 0\},$$
and showed that the dimension function $\eta_{\beta}:t\mapsto \dim_{\mathcal{H}}S_{\beta}(t)$ is a non-increasing devil staircase, see the left graph in Figure \ref{fig1}. Moreover, in another paper by Allaart and Kong \cite{allaart2022}, they gave a characterization of the critical value
$\tau(\beta)=\sup\{t:\dim_{\mathcal{H}}S_{\beta}(t)>0\} =\inf\{t:\dim_{\mathcal{H}}S_{\beta}(t)=0\}$
for each $\beta\in(1,2]$.
 \par Based on Kalle et al.'s work, Langeveld and Samuel \cite{Langeveld2023} recently consider the intermediate $\beta$-shifts with a hole at zero. They focus on intermediate $\beta$-transformation $T_{\beta,\alpha}: x\mapsto \beta x+\alpha \ ( \bmod \ 1)$ acting on $[0,1]$, where $(\beta,\alpha)\in\Delta:=\{(\beta, \alpha) \in \mathbb{R}^{2}:\beta \in (1, 2) \; \rm{and} \; \alpha \in[0,2 - \beta]\}.$  The survivor set is defined as
 $$S_{\beta,\alpha}(t):=\{x\in [0,1]: T_{\beta,\alpha}^{n}(x)\notin (0,t) \  \forall n\geq 0\}.
 $$
 They show that every intermediate $\beta$-transformation is topologically conjugate to a greedy $\beta$-transformation with a hole at zero, and provide a counterexample illustrating that the correspondence is not one-to-one. As an application of this observation, they obtain that the dimension function $\eta_{\beta,\alpha}:t\mapsto \dim_{\mathcal{H}}(S_{\beta,\alpha}(t))$ is a non-increasing devil staircase function, extending the work of $S_{\beta}(t)$ by Kalle et al., see the right graph in Figure \ref{fig1}.
 Note, through this article, by topologically conjugate we mean that
the conjugacy is one-to-one everywhere except on a countable set on which the conjugacy is at most finite to one.

\par Observed that the devil staircase results mentioned above mainly refer to $\beta$-transformations or intermediate $\beta$-transformations with a hole at zero. Two natural questions arise,
\begin{enumerate}
\item Can we extend the intermediate $\beta$-transformation to more general cases: expansive Lorenz maps?
\item If considering the hole at critical point $c$, can we still obtain the devil staircase?
\end{enumerate}
 In this paper, we give positive answers to the two questions. Here we focus on expansive Lorenz maps. A {\em Lorenz map} on $X=[0,1]$ is an interval map $f:X \to X$ such
that for some
$c\in (0,1)$ we have\\
\indent
(i)   $f$ is strictly increasing on $[0,c)$ and on $(c,1]$;\\
\indent (ii)  $\lim_{x \uparrow c}f(x)=1$, $\lim_{x \downarrow
	c}f(x)=0$.

If, in addition, $f$ satisfies the topological expansive condition

(iii) The preimages set $C(f)=\cup_{n \ge 0}f^{-n}(c)$ of $c$ is dense
in $X$,
then $f$ is said to be an {\bf expansive Lorenz map}. Lorenz maps are one-dimensional maps with a single discontinuity, which
arise as Poincar${\rm \acute{e}}$ return maps for flows on branched manifolds that model the strange
attractors of Lorenz systems. There are lots of studies about Lorenz maps\cite{cuiding2015,ding2011,glendinning1993,sun2023}, such as renormalization\cite{glendinning1996,hubbard1990,glendinning1993} , kneading invariants\cite{cuiding2015,ds2021,glendinning1990} and so on. For convenience, we denote $ELM$ be the set of expansive Lorenz maps, and $LM$ be the set of Lorenz maps.
\par Denote $\tilde{S}_{f}(H)$ as the symbolic representation of the survivor set $S_{f}(H)$, we wonder if there are some connections between the hole at zero and the hole at critical point? Let us see the following definitions of sets $S^{+}_{f}(H)$, and $\tilde{S}^{+}_{f}(H)$ is the symbolic representation.

$$
S^{+}_{f}(H)=\left \{
\begin{array}{ll}
\{x\in[0,1]: f^{n}(x)\geq f(b) \ \forall n\geq0\} & H=(0,f(b)), \\
\{x\in[0,1]: f^{n}(x)\leq f(a) \ \forall n\geq0\} & H=(f(a),1), \\
\{x\in[0,1]:f(b)\leq f^{n}(x)\leq a \ {\rm or} \ b \leq f^{n}(x) \leq f(a)\ \forall n\geq0\} & H=(a,b).
\end{array}
\right.
$$
Notice that the hole $H=(a,b)$ studied in this paper always satisfying $a\leq c\leq b$ and $a\neq b$.
\begin{theorem}\label{xinthm1}
Let $f\in ELM$ with a hole $(a,b)$. If $a=c$, then $S^{+}_{f}(c,b)=S^{+}_{f}(0,f(b))$, and $S_{f}(c,b)=S_{f}(0,f(b))$ if and only if $0\in S_{f}(0,f(b))$.
\end{theorem}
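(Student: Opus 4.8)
The whole argument rests on one observation: the right branch $f|_{(c,1]}$ is a strictly increasing homeomorphism onto $(0,f(1)]$ with $\lim_{x\downarrow c}f(x)=0$ and value $f(b)$ at $b$, so it maps the hole $(c,b)$ exactly onto $(0,f(b))$. I will also use two structural facts that follow from expansiveness (the absence of a forward invariant open subinterval disjoint from $c$): $f(x)\le x$ for $x\in(c,1]$, so in particular $f(b)\le b$, and $f(x)\ge x$ for $x\in[0,c)$ with no fixed point of $f|_{[0,c)}$ in $(0,c)$. Here I read $f(c)$ in the band $[b,f(c)]$ as the left limit $f(c^-)=1$, consistently with $a=c$ lying on the left branch.

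For the identity $S^{+}_{f}(c,b)=S^{+}_{f}(0,f(b))$: the band defining the left-hand side is $[f(b),c]\cup[b,1]$, that defining the right-hand side is $[f(b),1]$, and since $f(b)\le b$ the former is contained in the latter, which gives the inclusion $\subseteq$. For $\supseteq$, suppose $f^{n}(x)\ge f(b)$ for all $n$. If $f^{m}(x)\in(c,b)$ for some $m$, then monotonicity of the right branch yields $f^{m+1}(x)=f(f^{m}(x))<f(b)$, contradicting the hypothesis; hence every $f^{n}(x)$ lies in $[f(b),1]\setminus(c,b)=[f(b),c]\cup[b,1]$, i.e.\ $x\in S^{+}_{f}(c,b)$.

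For the second statement, one inclusion is unconditional: if $f^{m}(x)\in(c,b)$ then $f^{m+1}(x)\in f((c,b))=(0,f(b))$, so any orbit avoiding $(0,f(b))$ also avoids $(c,b)$, giving $S_{f}(0,f(b))\subseteq S_{f}(c,b)$. To handle the reverse, take $x\in S_{f}(c,b)$ and let $n$ be the first time $f^{n}(x)\in[0,f(b))$ (if there is none, then $x\in S^{+}_{f}(c,b)\subseteq S_{f}(0,f(b))$ by the first part). If $n\ge1$ then $f^{n-1}(x)$ lies in the band $[f(b),c]\cup[b,1]$ by minimality and avoids $(c,b)$; the cases $f^{n-1}(x)\in[b,1]$ and $f^{n-1}(x)\in[f(b),c)$ are each impossible, the former because $f^{n}(x)=f(f^{n-1}(x))\ge f(b)$ by monotonicity of the right branch, the latter because $f^{n}(x)\ge f^{n-1}(x)\ge f(b)$ by $f(y)\ge y$ on $[0,c)$; the boundary case $f^{n-1}(x)=c$ is one of the countably many orbits through the critical point and is treated separately. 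Thus $f^{n-1}(x)=0$, so $f^{n}(x)=f(0)\in(0,f(b))$, which already forces $0\notin S_{f}(0,f(b))$. Consequently, when $0\in S_{f}(0,f(b))$ no such $x$ exists and $S_{f}(c,b)=S_{f}(0,f(b))$; when $0\notin S_{f}(0,f(b))$ one uses the orbit of $0$ together with the band dynamics (which is forward invariant within $S_{f}(c,b)$) to exhibit a point of $S_{f}(c,b)\setminus S_{f}(0,f(b))$, so the two sets differ.

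The step I expect to be the main obstacle is the reverse inclusion in the second part: pinning down that an orbit in $S_{f}(c,b)$ which ever drops into $[0,f(b))$ must do so straight out of $0$ relies on using the expansiveness facts in the right places (non-decrease on $[0,c)$, no invariant subinterval, $\lim_{x\uparrow c}f(x)=1\ne c$ to rule out a monotone approach to $c$), and the bookkeeping of orbits hitting the critical point, where the itinerary is two-valued, has to be done carefully; the argument also needs adjusting in the regime $f(b)>c$, where $[f(b),c]$ is empty and part of $(0,f(b))$ overlaps the hole itself.
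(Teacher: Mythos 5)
Your proof of the first identity $S^{+}_{f}(c,b)=S^{+}_{f}(0,f(b))$ is correct and takes a genuinely different route from the paper's: you argue directly on the interval from monotonicity of the right branch together with the inequalities $f(x)\le x$ on $(c,1]$ and $f(x)\ge x$ on $[0,c)$ (both of which do follow from expansiveness, since a fixed point in $(0,c)$ or $(c,1)$ would create a forward-invariant subinterval containing no preimage of $c$), whereas the paper works entirely with kneading sequences and the sets $\tilde S^{+}$. For this half your argument is clean and arguably more transparent than the symbolic one.

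The second half has a genuine gap, and it sits exactly where you flagged the ``main obstacle''. First, your case analysis of the first entrance time $n$ into $[0,f(b))$ only treats $n\ge 1$, and there it eliminates every point of the band $[f(b),c]\cup[b,1]$ except $c$; since $f(b)>0$, the value $0$ does not lie in that band, so the conclusion ``thus $f^{n-1}(x)=0$'' does not follow --- the only surviving possibility is $f^{n-1}(x)=c$, which is precisely the case you set aside. Second, and more seriously, the case $n=0$, i.e.\ $x\in(0,f(b))\cap S_{f}(c,b)$, is never addressed, and this intersection is nonempty in general: the orbit of such a point climbs through $[0,c)$ (using $f\ge \mathrm{id}$ there) and can exit onto $c$ or onto $[b,1]$ without ever meeting $(c,b)$, and by expansiveness the relevant preimages of $c$ are dense. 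Concretely, for $f=T_{2}$ and $b=3/4$ (so $f(b)=1/2$), the point $1/4$ has orbit $1/4\mapsto 1/2\mapsto 0\mapsto 0\cdots$ (under either convention at the critical point), hence $1/4\in S_{f}(c,b)\setminus S_{f}(0,f(b))$ even though $0\in S_{f}(0,f(b))$. So the set equality you are after can only be recovered after accounting for a countable exceptional family of orbits through $C(f)$; this bookkeeping is exactly what the paper's proof is built around, namely matching the symbolic sets $\tilde S^{1}_{f}(c,b)$ and $\tilde S^{0}_{f}(0,f(b))$ of itineraries passing through $k_{+}$ and $k(0)$ by inserting or deleting a leading symbol $1$. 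Deferring the treatment of orbits through the critical point is therefore not a technicality in this theorem; it is the heart of the second assertion, and your argument does not close without it.
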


\begin{remark}
\
\par
\begin{enumerate}
\item If $b=c$, then $S^{+}_{f}(a,c)=S^{+}_{f}(f(a),1)$, and $S_{f}(a,c)=S_{f}(f(a),1)$ iff $1\in S_{f}(f(a),1)$.
\item If $0\notin S_{f}(0,f(b))$, then $S_{f}(c,b)\setminus S_{f}(0,f(b))$ is countable. So is it for the set $S_{f}(a,c)\setminus S_{f}(f(a),1)$ if $1\notin S_{f}(f(a),1)$.
\item By Proposition \ref{pro1}, $h_{top}(S_{f}(c,b))=h_{top}(S_{f}(0,f(b)))$. When $f$ being $T_{\beta}$ or $T_{\beta,\alpha}$ ($\beta\in(1,2)$ and $\alpha\in(0,2-\beta)$), all the results about the hole at zero \cite{kalle2020,Langeveld2023} can be naturally extended to the hole $(c,b)$ where $b$ varies in $(c,1)$.
\end{enumerate}
\end{remark}

\par  As a result, the case $a=c$ equals to the hole at 0, and the case $b=c$ is equivalent to the hole at $1$.
\begin{theorem}\label{theorem1}
Given $f\in ELM$ with a  hole $H=(a,b)$ and $\tilde{S}^{+}_{f}(H)\nsubseteqq\{0^\infty,1^\infty\}$, there exists $g\in LM$ such that $\Omega(g)=\tilde{S}^{+}_{f}(H)$. Conversely, given $g\in LM$, there exist $f\in ELM$ with a hole $H=(a,b)$ such that $\Omega(g)= \tilde{S}^{+}_{f}(H)$.
\end{theorem}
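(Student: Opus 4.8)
The plan is to reduce the statement to the combinatorics of two sequences and then apply the realization theorem for Lorenz kneading invariants; I work in the generic case $a<c<b$, the cases $a=c$ and $b=c$ being handled by Theorem~\ref{xinthm1} and the one-sided analogues of the argument below (using $S^{+}_{f}(0,\cdot)$ and $S^{+}_{f}(\cdot,1)$). First I would observe that the hypothesis $\tilde S^{+}_{f}(H)\nsubseteqq\{0^{\infty},1^{\infty}\}$ forces $f(b)\le a$ and $b\le f(a)$: if, say, $f(b)>a$, then every orbit in $S^{+}_{f}(H)$ is confined to $[b,f(a)]\subset(c,1]$, whence $\tilde S^{+}_{f}(H)\subseteq\{1^{\infty}\}$, and symmetrically. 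Given this, a point of $(a,c)$ maps under $f$ into $(f(a),1)$ and a point of $(c,b)$ maps into $(0,f(b))$, so an orbit stays in $[f(b),f(a)]$ precisely when it avoids $(a,b)$, giving
$$S^{+}_{f}(H)=\{x\in[0,1]:\ f^{n}(x)\in[f(b),f(a)]\ \text{for all }n\ge 0\}.$$
Passing to itineraries through the order-preserving coding $i_{f}$, which by expansivity is injective off a countable set and has dense image $\Omega(f)$, this becomes
$$\tilde S^{+}_{f}(H)=X:=\{s\in\{0,1\}^{\mathbb N}:\ \sigma^{n}s\in[\,i_{f}(f(b)),\,i_{f}(f(a))\,]\ \text{for all }n\ge 0\},$$
where the stated condition already implies $s\in\Omega(f)$ since $[\,i_{f}(f(b)),i_{f}(f(a))\,]\subseteq[\,i_{f}(0),i_{f}(1)\,]$.

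Next, $X$ is closed and $\sigma$-invariant; expansivity gives $i_{f}(f(b))\succ0^{\infty}$ (as $f(b)>0$) and $i_{f}(f(a))\prec1^{\infty}$ (as $f(a)<1$), so $\{0^{\infty},1^{\infty}\}\cap X=\varnothing$ and the hypothesis is equivalent to $X\ne\varnothing$; a short $\sigma$-invariance argument then shows $X$ contains sequences beginning with $0$ and with $1$. Put $\mu:=\min X$ and $\nu:=\max X$, so $\mu$ begins with $0$, $\nu$ begins with $1$, and, by $\sigma$-invariance, $\sigma^{n}\mu,\sigma^{n}\nu\in X\subseteq[\mu,\nu]$ for all $n$. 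This is exactly the admissibility making $(\mu,\nu)$ realizable as the endpoint kneading pair $(i_{g}(0),i_{g}(1))$ of a Lorenz map; by the realization of Lorenz kneading invariants (cf.\ \cite{hubbard1990} and the kneading-theory results in \cite{cuiding2015,ds2021}) there is $g\in LM$ with $\Omega(g)=\{s:\sigma^{n}s\in[\mu,\nu]\ \forall n\}$. This last set equals $X$: ``$\subseteq$'' because $\mu,\nu\in X$ gives $[\mu,\nu]\subseteq[i_{f}(f(b)),i_{f}(f(a))]$; ``$\supseteq$'' because $s\in X$ forces every $\sigma^{n}s$ into $X\subseteq[\min X,\max X]=[\mu,\nu]$. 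Hence $\Omega(g)=\tilde S^{+}_{f}(H)$.

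For the converse, given $g\in LM$ set $(\mu,\nu)=(i_{g}(0),i_{g}(1))$, so $\Omega(g)=\{s:\sigma^{n}s\in[\mu,\nu]\ \forall n\}$ with $(\mu,\nu)$ admissible; by the computation above it suffices to exhibit $f\in ELM$ and a hole $(a,b)$, $a\le c\le b$, with $i_{f}(f(b))=\mu$ and $i_{f}(f(a))=\nu$. When $\mu\ne0^{\infty}$ and $\nu\ne1^{\infty}$ take $f$ the doubling map ($c=\tfrac12$) and $a,b$ the points with binary expansions $0\nu$ and $1\mu$; then $a<\tfrac12<b$ and the desired identities hold, so $\tilde S^{+}_{f}(a,b)=\Omega(g)$. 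If $\mu=0^{\infty}$ or $\nu=1^{\infty}$ the shift $\Omega(g)$ is one-sided and is instead realized as a survivor set for a hole at $0$ or at $1$ (taking $a$ or $b$ equal to $c$), translating through Theorem~\ref{xinthm1}.

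The crux is the realization step in the second paragraph: one must produce a genuine $g\in LM$ whose orbit-of-endpoints codings are $\mu,\nu$ and — essentially — for which the closure of the set of itineraries is exactly $\{s:\sigma^{n}s\in[\mu,\nu]\ \forall n\}$ rather than a proper subshift of it (this is where $g$ may be non-expansive, and where the classical construction of Lorenz maps from prescribed kneading data, blowing up the coding and filling the gaps monotonically, must be used with care). Everything else is order-theoretic bookkeeping with itineraries, modulo the usual countable set of pre-critical points where $i_{f}$ fails to be injective.
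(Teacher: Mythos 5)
Your proposal is correct and follows essentially the same route as the paper: it reduces $\tilde S^{+}_{f}(H)$ to the symbolic set $\{w:\sigma(\mathbf b)\preceq\sigma^{n}(w)\preceq\sigma(\mathbf a)\ \forall n\}$ (the paper's Lemma \ref{lemma2}), extracts a weak-admissible kneading pair by taking $\mu=\min$ and $\nu=\max$ of the survivor set exactly as in the opening of the paper's Lemma \ref{xin3}, and invokes the Hubbard--Sparrow realization for the existence of $g$; the converse likewise matches the paper's choice of an $f$ with $\Omega(f)\supsetneqq\Omega(g)$ and endpoints $a,b$ whose kneading sequences are $0\nu$ and $1\mu$. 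The only difference is cosmetic: you obtain the admissible pair abstractly as $(\min,\max)$, whereas the paper additionally carries out the explicit iterative construction of $s$ and $t$ (which it needs later for the plateau analysis, not for this theorem).
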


\par As we can see, Theorem \ref{theorem1} obtains the essential connection between $g\in LM$ and open dynamical systems driven by $f\in ELM$. In Proposition \ref{pro1}, we prove that $h_{top}(\tilde{S}^{+}_{f}(H))=h_{top}(\tilde{S}_{f}(H))$, hence we are able to calculate the topological entropy of survivor set via the kneading determinants of Lorenz map $g$. Moreover, similar to the results on intermediate $\beta$-transformations \cite{Langeveld2023}, we observe that every expansive Lorenz map with a hole $H=(a,b)$ is topologically conjugate to another Lorenz map.
Next we consider the following bifurcation set $E_{f}(a)$ with $a$ being fixed,
$$ E_{f}(a):=\{b\in[c,1]:S^{+}_{f}(a,\epsilon)\neq S^{+}_{f}(a,b) \ {\rm for \ any} \ \epsilon>b, {\rm \ where} \ a\in[0,c]\ {\rm is \ fixed}\}.
$$
In particular, if $a=c$, we write the bifurcation set $E_{f}(a)$ as $E_{f}$. Since it would be similar to consider the case $b$ being fixed, here we only prove the case $E_{f}(a)$.
%\begin{align*}\begin{cases}E^{1}_{f}:=\{a\in[0,c]:S^{+}_{f}(\epsilon)\neq S^{+}_{f}(a) \ {\rm for \ any} \ \epsilon>a\},\\E^{2}_{f}:=\{b\in[c,1]:S_{f}(a,\epsilon)\neq S_{f}(a,b) \ {\rm for \ any} \ \epsilon>b, {\rm \ where} \ a\in[0,c]\ {\rm is \ fixed}\},\\E^{3}_{f}:=\{b\in[c,1]:S^{+}_{f}(\epsilon)\neq S^{+}_{f}(a) \ {\rm for \ any} \ \epsilon<b\}.\end{cases}\end{align*}

%In fact, $\eta_{f}:b\mapsto\dim_{H}(S_{f}(b))$ is continuous and non-decreasing as the $b$ increases when considering the hole $(b,1)$, and the methods used are the same as the case $\dim_{H}(S_{f}(a))$, hence we do not give the details about the hole $(b,1)$ through this paper.
 \par There are many metric and topological results on the bifurcation sets before, and mainly focus on $E_{f}$, i.e., $a=c$ and is equivalent to the hole at zero. Urb${\rm\acute{a}}$nski \cite{Urbanski1986} proved that $E_{T_{2}}$ is a Lebesgue null set of full Hausdorff dimension when $f$ is the doubling map. Kalle et al. \cite{kalle2020} extended this to $T_{\beta}$ with $\beta\in(1,2]$, and  obtained that $E_{T_{\beta}}$ is of null Lebesgue measure with full Hausdorff dimension, they also considered the topological structure of the bifurcation sets. When $f$ being $T_{\beta,\alpha}$ with $(\beta,\alpha)\in\Delta$, Langeveld and Samuel \cite{Langeveld2023} proved that $E_{T_{\beta,\alpha}}$ is of null Lebesgue measure but with full Hausdorff dimension. Observed that all the proofs of Lebesgue null set require $f$ owning ergodic and absolutely continuous invariant measure w.r.t. Lebesgue measure, but the existence of ergodic acim for general expansive Lorenz maps is unknown. However, Ding, Fan and Yu \cite{DFY2010} gave the sufficient and necessary conditions for a piecewise linear Lorenz map owning ergodic acim. As a result, the following theorem also holds for some piecewise linear Lorenz maps.

\begin{theorem}\label{theorem2}
Let $f\in ELM$ with ergodic a.c.i.m.. Then the topological entropy function $\lambda_{f}(a):b\mapsto h_{top}(\tilde{S}_{f}(H))$ is a devil staircase, where $H=(a,b)$ and $a$ is fixed.
\end{theorem}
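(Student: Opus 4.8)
The plan is to show that $\lambda_f(a)$ is (i) monotone, (ii) "locally constant on a dense open set", and (iii) continuous; together these three properties are exactly what it means to be a devil staircase. Fix $a\in[0,c]$ and let $b$ range over $[c,1]$. Monotonicity is immediate from the containment $S_f(a,b')\subseteq S_f(a,b)$ whenever $b\le b'$ (a larger hole removes more points), so $\lambda_f(a)$ is non-increasing; combined with $h_{top}\ge 0$ and $h_{top}(\tilde S_f(a,c))\le h_{top}(f)<\infty$, the function is bounded. The heart of the argument is the "flat pieces" claim: I would invoke Theorem \ref{theorem1} to realize $\tilde S^{+}_f(a,b)=\Omega(g_b)$ for a Lorenz map $g_b\in LM$ (when the survivor set is nontrivial; the trivial case $\tilde S^{+}_f(a,b)\subseteq\{0^\infty,1^\infty\}$ gives $\lambda_f(a)(b)=0$ and can be handled separately as the right tail), and then use Proposition \ref{pro1} so that $\lambda_f(a)(b)=h_{top}(\tilde S_f(a,b))=h_{top}(\tilde S^{+}_f(a,b))=h_{top}(\Omega(g_b))$, which is computable from the kneading invariants of $g_b$. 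Using the bifurcation set $E_f(a)$: by definition, on each connected component of $[c,1]\setminus E_f(a)$ the set $S^{+}_f(a,b)$ — hence $\tilde S_f(a,b)$ and hence $\lambda_f(a)(b)$ — is constant. So it remains to show $E_f(a)$ has empty interior (equivalently, that $[c,1]\setminus E_f(a)$ is dense), which is where the hypothesis of an ergodic acim enters: as in \cite{kalle2020,Langeveld2023}, the acim lets one prove that $E_f(a)$ is a Lebesgue-null set, in particular with empty interior, so the flat pieces are dense.

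Next I would establish continuity. Right-continuity of $b\mapsto S^{+}_f(a,b)$ (and hence of $\lambda_f(a)$) is essentially automatic from the definition, since $S^{+}_f(a,b)=\bigcap_{\epsilon<b}S^{+}_f(a,\epsilon)$ in the relevant orbit-condition formulation, so $\lambda_f(a)$ has no downward jumps as $b$ increases. For the remaining (potential upward) jumps, I would argue via the kneading/symbolic description: a jump in topological entropy at a parameter $b_0$ would force the corresponding kneading data of $g_{b_0}$ to be "isolated" in a way incompatible with the density of $C(f)$ (the expansiveness condition (iii)), or more directly, one appeals to continuity of topological entropy for the family of subshifts $\Omega(g_b)$ as $g_b$ varies — a standard consequence of the fact that entropy of a $\beta$-like / Lorenz shift is continuous in its kneading pair, together with the observation that the kneading pair of $g_b$ varies continuously (indeed monotonically) with $b$ off the countable set of discontinuities. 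Combining upper semicontinuity from the intersection structure with this kneading-continuity input yields full continuity of $\lambda_f(a)$.

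Finally, a devil staircase additionally requires that $\lambda_f(a)$ be non-constant — i.e., that it actually attains a positive value and also the value corresponding to the large-hole regime — which follows from choosing $b=c$ (then $S^{+}_f(a,c)$ has positive entropy provided $S_f(a,c)\nsubseteq\{0,1\}$) versus $b$ close to $1$ where the survivor set degenerates; the non-triviality range is controlled by the results surrounding Theorem \ref{xinthm1} and Theorem \ref{theorem1}. Assembling: monotone $+$ continuous $+$ locally constant on a dense open set $+$ non-constant $=$ devil staircase. The main obstacle I anticipate is the density of $[c,1]\setminus E_f(a)$, i.e.\ showing $E_f(a)$ has empty interior; this is precisely the step that needs the ergodic acim, and making the Lebesgue-null argument work uniformly in the fixed parameter $a$ (rather than only for $a=c$, the hole-at-zero case already treated in the literature) is the delicate point — one must transport the acim-based argument of \cite{kalle2020,Langeveld2023} through the conjugacy/identification furnished by Theorems \ref{xinthm1} and \ref{theorem1}, checking that the countable exceptional sets introduced there do not destroy the null-measure conclusion.
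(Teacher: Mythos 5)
Your overall architecture (monotone $+$ continuous $+$ constant off a small set) matches the paper's, and you correctly identify that the ergodic acim is what makes $E_{f}(a)$ Lebesgue-null. But the two load-bearing steps are left as gestures rather than proofs, and in both cases the paper's actual mechanism is different from what you propose. For the null-measure step, you suggest ``transporting the acim-based argument of \cite{kalle2020,Langeveld2023} through the conjugacy/identification furnished by Theorems \ref{xinthm1} and \ref{theorem1}'' and yourself flag this as the delicate unresolved point; the paper does not do this at all. Instead it uses the characterization $E_{f}(a)=\{b\in[c,1]:{\rm \bf {b}}\in \tilde{S}^{+}_{f}(a,b)\}$ from Proposition \ref{pro2} to get the direct containment $E_{f}(a)\cap[f(b),f(a)]\subseteq S^{+}_{f}(a,b)$, hence $E_{f}(a)\subseteq \bigcup_{N}S^{+}_{f}(a,c+\frac{1-c}{N})$, and each set in this union is Lebesgue-null because ergodicity of the acim (equivalent to Lebesgue) forces almost every orbit to enter the hole $(a,c+\frac{1-c}{N})$. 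Subadditivity finishes it. This is a short self-contained argument, valid for every fixed $a\in[0,c]$, and it sidesteps entirely the worry about exceptional countable sets under the conjugacy; your proposal as written does not close this gap.

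The second gap is continuity. The paper obtains it by citing Raith's result (Proposition \ref{continuity}), which gives continuity of $b\mapsto h_{top}(S_{f}(a,b))$ directly for piecewise monotone expanding maps with an interval removed. Your replacement argument --- right-continuity from an intersection structure plus ``continuity of entropy in the kneading pair of $g_{b}$, which varies continuously with $b$'' --- is not a proof: entropy of a Lorenz shift is not obviously continuous as a function of the kneading pair in the product topology (small perturbations of $(k_{+},k_{-})$ can change admissibility and the shift drastically), and the map $b\mapsto g_{b}$ produced by Theorem \ref{theorem1} and Lemma \ref{xin3} is locally constant on plateaux and jumps at bifurcation points, so ``varies continuously'' needs justification precisely at the points where continuity of $\lambda_{f}(a)$ is in question. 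Either supply that argument or, as the paper does, invoke the existing continuity theorem. Finally, note that the conclusion the paper actually proves is ``decreasing, continuous, and constant Lebesgue almost everywhere''; your weaker formulation ``locally constant on a dense open set'' (empty interior of $E_{f}(a)$) is implied by, but should be replaced with, the null-measure statement.
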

It is clear to see that $\lambda_{f}(a)$ is decreasing. However, if we consider the case $b$ being fixed, the entropy function $\lambda_{f}(b)$ is also a devil staircase, the only difference is that $\lambda_{f}(b)$ is increasing. When we consider $f$ being an intermediate $\beta$-transformation, its Lyapunov exponent is $\log \beta$, with the help of Ledrappier-Young formula due to Raith \cite{raith1994}, we know that the Hausodrff dimension
$$ \dim_{\mathcal{H}}(S_{T_{\beta,\alpha}}(H))=\frac{h_{top}(T_{\beta,\alpha}|
S_{T_{\beta,\alpha}}(H))}{\log \beta}.
$$
As an application of Theorem \ref{theorem2} and the dimension formula, we can quickly obtain the following corollary.

\begin{corollary}
Let $f=T_{\beta,\alpha}$ with hole $H=(a,b)$, where $(\beta,\alpha)\in\Delta$ and $a\in [0,\frac{1-\alpha}{\beta}]$ is fixed. Then the Hausdorff dimension function $\eta_{f}(a):b\mapsto \dim_{\mathcal{H}}(S_{f}(H))$ is a devil staircase, that is, $\eta_{f}(a)$ is decreasing, and $\eta_{f}(a)$ is constant Lebesgue almost everywhere.
\end{corollary}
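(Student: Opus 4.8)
The plan is to combine Theorem~\ref{theorem2} with the Ledrappier--Young formula due to Raith \cite{raith1994} quoted above, the whole point being that $T_{\beta,\alpha}$ has constant slope. First I would record the standing facts about $f=T_{\beta,\alpha}$ for $(\beta,\alpha)\in\Delta$: its single discontinuity sits at $c=\frac{1-\alpha}{\beta}\in(0,1)$, so the hypothesis $a\in[0,\frac{1-\alpha}{\beta}]$ is precisely $a\le c$ and we are in the situation $a\le c\le b$ treated in the rest of the paper; both branches of $f$ are affine with slope $\beta>1$, so $f$ is uniformly expanding, whence $C(f)=\bigcup_{n\ge0}f^{-n}(c)$ is dense and $f\in ELM$; and the Lyapunov exponent of $f$ is identically $\log\beta$. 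It is classical that every intermediate $\beta$-transformation possesses an ergodic absolutely continuous invariant measure (used already in \cite{Langeveld2023} for the hole at zero), so the hypotheses of Theorem~\ref{theorem2} hold and $\lambda_{f}(a)\colon b\mapsto h_{top}(\tilde{S}_{f}(a,b))$ is a decreasing devil staircase.

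Next I would invoke Raith's formula. For a hole $H=(a,b)$ the survivor set $S_{f}(H)=[0,1]\setminus\bigcup_{n\ge0}f^{-n}(H)$ is closed, since $H$ is open, hence compact, and it is forward invariant because $x\in S_{f}(H)$ forces $f^{n}(f(x))=f^{n+1}(x)\notin H$ for all $n\ge0$. Thus $f|S_{f}(H)$ is a subsystem of the piecewise linear expanding map $T_{\beta,\alpha}$ with constant expansion rate $\beta$, and Raith's theorem gives
$$\dim_{\mathcal{H}}\bigl(S_{T_{\beta,\alpha}}(H)\bigr)=\frac{h_{top}\bigl(T_{\beta,\alpha}|S_{T_{\beta,\alpha}}(H)\bigr)}{\log\beta}.$$
Since the symbolic coding of an expansive Lorenz map is a conjugacy off a countable set, $h_{top}(T_{\beta,\alpha}|S_{f}(H))=h_{top}(\tilde{S}_{f}(H))=\lambda_{f}(a)(b)$, so $\eta_{f}(a)(b)=\lambda_{f}(a)(b)/\log\beta$: the dimension function is the entropy devil staircase of Theorem~\ref{theorem2} rescaled by the fixed positive constant $1/\log\beta$.

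Finally I would observe that multiplying a devil staircase by a positive constant again yields a devil staircase --- monotone decrease, continuity, non-constancy and constancy on a set of full Lebesgue measure are all preserved --- which gives the claim. The step I expect to need the most care is the application of Raith's formula: one must confirm that his framework (Hausdorff dimension of invariant sets for piecewise monotone interval maps, via a variational principle relating dimension, entropy and Lyapunov exponent) genuinely applies to the possibly totally disconnected, Lebesgue-null set $S_{f}(H)$ with no transitivity or positive-measure hypothesis lost in the passage to the subsystem, and that the entropy appearing there is the same quantity used in Theorem~\ref{theorem2}. The remaining points --- ergodicity of the acim over all of $\Delta$, including the boundary slopes $\alpha\in\{0,\,2-\beta\}$, and the elementary scaling argument --- are routine.
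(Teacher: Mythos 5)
Your proposal is correct and follows essentially the same route as the paper: the paper likewise derives the corollary by combining Theorem \ref{theorem2} (the entropy function is a devil staircase, using that $T_{\beta,\alpha}$ has an ergodic acim) with Raith's Ledrappier--Young formula $\dim_{\mathcal{H}}(S_{T_{\beta,\alpha}}(H))=h_{top}(T_{\beta,\alpha}|S_{T_{\beta,\alpha}}(H))/\log\beta$, the constant Lyapunov exponent $\log\beta$ making the dimension a positive rescaling of the entropy staircase. Your additional care about verifying $a\le c$, the equality of the symbolic and interval entropies, and the applicability of Raith's framework is sound and, if anything, more explicit than the paper's brief derivation.
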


\par Our work is organized as follows. In Section 2, we introduce some preliminary concepts, including kneading invariants, H-S admissible condition and kneading determinants. Section 3 gives some valuable lemmas and presents the proof of Theorem \ref{xinthm1} and \ref{theorem1}. We study the plateaux of survivor sets $\tilde{S}^{+}_{f}(a,b)$ in Section \ref{platformsec} and prove Theorem \ref{theorem2}. Finally, Section \ref{comments} concludes the paper with some additional comments for future research.
To aid in visualizing our results, we include various interesting examples in the appendix.

\section{Preliminaries}
\subsection{Notations}
\
\par We equip the space $\{0,1\}^\mathbb{N}$ of infinite sequences with the topology induced by the usual metric $d \colon \{0,1\}^\mathbb{N} \times \{0,1\}^\mathbb{N} \to \mathbb{R}$ which is given by
\begin{align*}
d(\omega, \nu) \coloneqq
\begin{cases}
0 & \text{if} \; \omega = \nu,\\
2^{- \lvert\omega \wedge \nu\rvert + 1} & \text{otherwise}.
\end{cases}
\end{align*}
Here $\rvert \omega \wedge \nu \lvert \coloneqq \min \, \{ \, n \in \mathbb{N} \colon \omega_{n} \neq \nu_n \}$, for all $\omega = (\omega_{1}\omega_{2}\dots) , \nu = ( \nu_{1} \nu_{2}\dots) \in \{0, 1\}^{\mathbb{N}}$ with $\omega \neq \nu$. Note that the topology induced by $d$ on $\{ 0, 1\}^{\mathbb{N}}$ coincides with the product topology on $\{ 0, 1\}^{\mathbb{N}}$.  For $n\in\mathbb{N}$ and $\omega\in\{0,1\}^{\mathbb{N}}$, we set $\omega|_{1}^{n}=\omega|_{n}=(w_{1}\cdots w_{n})$ and call $n$ the length of $\omega|_{n}$. We let $\sigma \colon \{ 0, 1 \}^{\mathbb{N}} \circlearrowleft$ denote the \textsl{left-shift map} which is defined by $\sigma(\omega_{1} \omega_{2} \dots) \coloneqq (\omega_{2} \omega_{3}\dots)$.  A \textsl{subshift} is any closed subset $\Omega \subseteq \{0,1\}^\mathbb{N}$ such that $\sigma(\Omega) \subseteq \Omega$.  Given a subshift $\Omega$ and $n \in \mathbb{N}$ we set
\begin{align*}
\Omega\lvert_{n} \coloneqq \left\{ (\omega_{1} \dots \omega_{n}) \in \{ 0, 1\}^{n} \colon \text{there exists} \, \omega \in \Omega \, \ \text{with} \ \, \omega|_{n} = (\omega_{1} \dots \omega_{n}) \right\}
\end{align*}
and denote by $\Omega^{*} \coloneqq \bigcup_{n = 1}^{\infty} \Omega\lvert_{n}$ for the collection of all finite words. For $\xi\in\Omega^{*}$, we denote $|\xi|$ as the length of $\xi$. And we denote by $\#\Omega|_{n}$ the cardinality of $\Omega|_{n}$.
\par For $n, m \in \mathbb{N}$ and $\nu = (\nu_{1}\dots \nu_{n}),\, \xi = (\xi_{1} \dots \xi_{m}) \in \{ 0, 1\}^{*}$, set
$
\nu \xi\coloneqq (\nu_{1} \dots \nu_{n} \xi_{1} \dots\xi_{m});
$
we use the same notation when $\xi \in \{ 0, 1\}^{\mathbb{N}}$.  An infinite word $\omega = (\omega_{1} \omega_{2} \dots) \in \{0, 1\}^{\mathbb{N}}$ is called \textsl{periodic} with \textsl{period} $n \in \mathbb{N}$ if and only if, $(\omega_{1}\dots \omega_{n}) = (\omega_{(m - 1)n + 1} \dots \omega_{m n})$, for all $m \in \mathbb{N}$; in which case we write $\omega = (\omega_{1} \dots \omega_{n})^{\infty}$.  Similarly,  $\omega = (\omega_{1}\omega_{2} \dots) \in \{0, 1\}^{\mathbb{N}}$ is called \textsl{eventually periodic} with \textsl{period} $n \in \mathbb{N}$ if and only if there exists $k \in \mathbb{N}$ such that $(\omega_{k+1} \dots \omega_{k+n}) = (\omega_{k+(m - 1)n + 1} \dots, \omega_{k+ m n})$ for all $m \in \mathbb{N}$; in which case we write $\omega = \omega_{1}\dots \omega_{k} (\omega_{k+1} \dots \omega_{k+n})^{\infty}$.

\subsection{Kneading invariants and H-S admissible condition}
\
\par Let $f$ be an expansive Lorenz map. The trajectories of points in $[0,1]$ by $f$ can be coded by elements of $\{0,\ 1\}^{\mathbb{N}}$, which denotes the set of infinite sequences on the alphabet $\{0,\ 1\}$. Denote $\omega = (\omega_{1}\omega_{2}\dots) , \nu = ( \nu_{1} \nu_{2}\dots) \in \{0,\ 1\}^{\mathbb{N}}$
, the lexicographic order on $\{0,\ 1\}^{\mathbb{N}}$ is the total order defined $\omega\prec \nu$ if $\omega\neq \nu$ and $\omega_n<\nu_n$, where $n$ is the least index such that $\omega_n\neq \nu_n$.

 The kneading
sequence of a point $x$, $\tau_f(x)$, is defined to be $\epsilon_0
\epsilon_1 \ldots$ of $0's$ and $1's$ as follows:
$$ \epsilon_i=0\ \ \ \ \ \ {\rm if}\ \ \ f^i(x)<c \ \ \ \ {\rm and} \ \ \
\epsilon_i=1\ \ \ \ \ \ {\rm if}\ \ \ f^i(x)>c.$$
This definition works for $x \notin C(f)=\cup_{n \ge 0}f^{-n}(c)$. In the case where $x$ is
a preimage of $c$, $x$ has upper and lower kneading sequences
$$ \tau_{f}(x+)=\lim_{y\downarrow x}\tau_f(y), \ \ \ \ \ \ \ \ \
\tau_{f}(x-)=\lim_{y\uparrow x}\tau_f(y),$$ where the $y's$ run through
points of $[0,1]$ which are not the preimages of $c$. In particular,
$(k_+, k_-)=(\tau_{f}(c+), \tau_{f}(c-)) $ are called the {\bf kneading
invariants} of $f$, which were used to developing the renormalization
theory of expansive Lorenz map by Glendinning and Sparrow \cite{hubbard1990}.
The kneading space of $f$, also called Lorenz shift, is
$$\Omega(f)=\{\tau_f(x): x \in I\}.$$
Since $\sigma$ is the shift map operating on the Lorenz shift
$\Omega(f)$, then clearly $\tau_{f}(f(x))=\sigma(\tau_f(x))$, with similar
results holding for the upper and lower kneading sequences of points
$x$ which are preimages of $c$. Moreover, we denote $k(0)=\sigma(k_{+})$ and $k(1)=\sigma(k_{-})$. The dynamics of $f$ on
$I$ can be modeled by the shift map $\sigma$ on kneading space.

\begin{theorem}{\bf({\cite[Theorem 2]{hubbard1990}})}\label{rem:k+k-}
Let $f$ be a Lorenz map, the kneading space $\Omega(f)$ is completely determined by the kneading invariants $(k_+,k_-)$ of $f$;  indeed, we have
\begin{align*}
\ \ \ \ \ \ \ \ \Omega(f)&= \left\{ \omega \in \{ 0, 1\}^{\mathbb{N}} \colon k(0) \preceq \sigma^{n}(\omega) \preceq k_{-} \, \ \textup{or} \ \, k_{+} \preceq \sigma^{n}(\omega) \preceq k(1) \, \ \textup{for all} \, n \in \mathbb{N}_{0} \right\}\!,\\
 &= \left\{ \omega \in \{ 0, 1\}^{\mathbb{N}} \colon k(0) \preceq  \sigma^{n}(\omega) \preceq k(1) \ \textup{for all} \, n \in \mathbb{N}_{0} \right\}\!.
\end{align*}
Moreover,
$
\Omega(f)$ is closed with respect to the metric $d$ and hence is a subshift.
\end{theorem}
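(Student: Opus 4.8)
The plan is to exploit the one structural feature of a Lorenz map, that $f$ is strictly increasing on each of the two branches $[0,c)$ and $(c,1]$ while the symbol order $0\prec 1$ matches the spatial order $[0,c)<(c,1]$. The first step is to record the \emph{monotonicity of the itinerary map}: an induction on the length of the longest common prefix shows that $x\le y$ implies $\tau_f(x)\preceq\tau_f(y)$, and likewise for the one-sided itineraries $\tau_f(x\pm)$ — if $x,y$ lie on different branches their itineraries already differ correctly in the first coordinate, and if they lie on the same branch one applies the inductive hypothesis to $f(x)\le f(y)$. Since $0<c<1$, this yields the identifications $k(0)=\sigma(k_+)=\tau_f(0+)$ and $k(1)=\sigma(k_-)=\tau_f(1-)$, the fact that $k(0)$ and $k_-$ begin with $0$ whereas $k_+$ and $k(1)$ begin with $1$, and the chain $k(0)\prec k_+$, $k(0)\preceq k_-$, $k_+\preceq k(1)$, $k_-\prec k(1)$.

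Next, the inclusion of $\Omega(f)$ in the first set on the right. For $\omega=\tau_f(x)$ and any $n\in\mathbb{N}_0$ we have $\sigma^n(\omega)=\tau_f(f^n(x))$. If $f^n(x)<c$, then $0\le f^n(x)<c$ and monotonicity together with the identifications above gives $k(0)=\tau_f(0+)\preceq\sigma^n(\omega)\preceq\tau_f(c-)=k_-$; symmetrically $f^n(x)>c$ gives $k_+\preceq\sigma^n(\omega)\preceq k(1)$. For $x\in C(f)$ the itinerary $\tau_f(x\pm)$ is a $\preceq$-limit of itineraries $\tau_f(y)$ with $y\notin C(f)$, and since the admissibility condition is defined by non-strict lexicographic inequalities it is closed in the product topology, hence passes to these limits. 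Thus $\Omega(f)$ lies inside the first right-hand set.

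That the two right-hand descriptions coincide is purely combinatorial, using only $\sigma(k_+)=k(0)$, $\sigma(k_-)=k(1)$ and the first-symbol facts. One inclusion follows from $k(0)\prec k_+$ and $k_-\prec k(1)$. For the other, suppose $k(0)\preceq\sigma^n(\omega)\preceq k(1)$ for all $n$ and fix $n$: if $\sigma^n(\omega)$ begins with $0$ and one had $\sigma^n(\omega)\succ k_-$, then stripping the common leading $0$ (recall $k_-=0\,k(1)$) would give $\sigma^{n+1}(\omega)\succ k(1)$, a contradiction, so $k(0)\preceq\sigma^n(\omega)\preceq k_-$; symmetrically, if $\sigma^n(\omega)$ begins with $1$ then $k_+=1\,k(0)$ and $\sigma^{n+1}(\omega)\succeq k(0)$ force $\sigma^n(\omega)\succeq k_+$, so $k_+\preceq\sigma^n(\omega)\preceq k(1)$. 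Write $\Sigma$ for the resulting common set.

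It remains to prove $\Sigma\subseteq\Omega(f)$ and the closedness of $\Omega(f)$. Given $\omega\in\Sigma$, set $E_n=\{y\in[0,1]\setminus C(f):\tau_f(y)|_n=\omega|_n\}$; by monotonicity of $\tau_f$, when $E_n$ is nonempty its closure $\overline{E_n}$ is a closed subinterval of $[0,1]$, and $\overline{E_{n+1}}\subseteq\overline{E_n}$. The crucial claim is that $\omega\in\Sigma$ forces every $E_n$ to be nonempty, proved by induction on $n$. The restriction of $f^n$ to $E_n$ is strictly increasing — a composition of increasing branches, since $\overline{E_n}$ lies in a single $n$-cylinder — and extends to a homeomorphism of $\overline{E_n}$ onto a closed interval $V_n$; by $\tau_f\circ f=\sigma\circ\tau_f$, the one-sided itineraries of the endpoints of $V_n$ are the $\sigma^n$-images of those of the endpoints of $\overline{E_n}$, hence are exactly the smallest and largest continuations allowed by the prefix $\omega|_n$ together with the kneading data $k(0)=\tau_f(0+)$, $k_-=\tau_f(c-)$, $k_+=\tau_f(c+)$, $k(1)=\tau_f(1-)$. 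The inequality that membership in $\Sigma$ imposes on $\sigma^n(\omega)$ — namely $k(0)\preceq\sigma^n(\omega)\preceq k_-$ when $\omega_{n+1}=0$, and $k_+\preceq\sigma^n(\omega)\preceq k(1)$ when $\omega_{n+1}=1$ — then says precisely that $V_n$ reaches across to a point lying on the side of $c$ named by $\omega_{n+1}$, so $E_{n+1}\neq\emptyset$. Choosing $y_n\in E_n$ gives $\tau_f(y_n)\to\omega$, and any accumulation point $x$ of $(y_n)$ in $[0,1]$ realizes $\omega$ as $\tau_f(x)$ or as one of $\tau_f(x\pm)$, so $\omega\in\Omega(f)$; the same argument applied to an arbitrary convergent sequence of itineraries shows $\Omega(f)$ is closed, and since $\sigma(\Omega(f))\subseteq\Omega(f)$ it is a subshift. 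I expect the inductive nonemptiness of the $E_n$ to be the main obstacle: it is exactly the point at which one must verify that the kneading inequalities are not merely necessary but genuinely realizable, i.e.\ that the bounding sequences $k_\pm$, $k(0)$, $k(1)$ are attained by the orbit structure of $f$ rather than only constraining it.
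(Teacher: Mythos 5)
This statement is quoted from Hubbard--Sparrow (\cite[Theorem~2]{hubbard1990}) and the paper supplies no proof of its own, so there is nothing internal to compare against; your argument is, in substance, the standard realizability proof for kneading theory of Lorenz maps, and its overall structure is sound. The preliminary steps are all correct: monotonicity of $\tau_f$, the identifications $k(0)=\tau_f(0+)$, $k(1)=\tau_f(1-)$, the necessity of the inequalities (with the limit argument for points of $C(f)$, which works because each set $\{\omega:\nu\preceq\sigma^n(\omega)\preceq\nu'\}$ is closed), and the purely combinatorial equivalence of the two descriptions via $k_-=0\,k(1)$, $k_+=1\,k(0)$. The one place where you assert rather than prove is exactly the spot you flag: the claim that the endpoint itineraries of $V_n$ are ``exactly the smallest and largest continuations allowed.'' As phrased this risks circularity (knowing the extremes of \emph{realized} continuations is what you are trying to establish). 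The clean way to close it is to carry an explicit inductive invariant: writing $L_n=\tau_f(p_n+)$ and $R_n=\tau_f(q_n-)$ for $V_n=[p_n,q_n]$, one has the recursion $L_{n+1}=\sigma(L_n)$ if $\omega_{n+1}=0$ and $L_{n+1}=k(0)$ if $\omega_{n+1}=1$ with $p_n\leq c$ (symmetrically $R_{n+1}=\sigma(R_n)$ or $k(1)$), and one proves by induction that $L_n\preceq\sigma^n(\omega)\preceq R_n$. The base case is $k(0)\preceq\omega\preceq k(1)$; in the inductive step, $\omega_{n+1}=0$ together with $L_n\preceq\sigma^n(\omega)$ forces $L_n$ to begin with $0$, hence $p_n<c$ and $E_{n+1}\neq\emptyset$, and the invariant propagates by stripping the common first symbol (using $\sigma^{n+1}(\omega)\preceq k(1)$, resp.\ $\succeq k(0)$, when an endpoint is reset at $c$). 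Note that this induction only ever uses the second, simpler description of the admissible set. With that invariant made explicit, and with the routine observations that $V_n$ never degenerates to a point and that $E_n=\overline{E_n}\setminus C(f)$ so that a point of $\bigcap_n\overline{E_n}$ realizes $\omega$ as one of $\tau_f(x)$, $\tau_f(x\pm)$, your proof is complete.
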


We can see that each $f\in ELM$ corresponds to a Lorenz shift $\Omega(f)$, and $\Omega(f)$ is totally determined by its kneading invariants $(k_{+},k_{-})$. If we know the kneading invariants of $f$, we also write the Lorenz shift as $\Omega(k_{+},k_{-})$. A natural question stemming from above is that, what kind of sequences in $\{ 0, 1 \}^{\mathbb{N}}$ can be the kneading invariants of an expansive Lorenz map? It was solved by Hubbard and Sparrow in \cite{hubbard1990} and we usually call it {\bf H-S admissible condition}, which stated as follows.

\begin{theorem}{\bf({\cite[Theorem 1]{hubbard1990}})} \label{kneading space}
If $f\in ELM$, then its kneading invariants $(k_{+},k_{-})$ satisfy
	\begin{equation} \label{expanding}
		\sigma (k_{+})\preceq \sigma^n(k_{+})\prec\sigma (k_{-}), \ \ \ \ \ \ \sigma (k_{+})\prec \sigma^n(k_{-})\preceq\sigma (k_{-}) \ \ \ \  \forall n \ge 0,
	\end{equation}
 Conversely, given any two sequences $k_{+}$ and $k_{-}$ satisfying $(\ref{expanding})$, there exists $f\in ELM$ with $(k_{+},k_{-})$ as its kneading invariant, and $f$ is unique up to conjugacy.
\end{theorem}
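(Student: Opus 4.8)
The plan is to prove the two implications separately, with the order‑preservation of the itinerary map $x\mapsto\tau_f(x)$ as the common tool. The first step is the \emph{kneading monotonicity lemma}: if $x<y$ in $[0,1]$ are not preimages of $c$, then $\tau_f(x)\preceq\tau_f(y)$, with strict inequality whenever the open interval $(x,y)$ meets $C(f)$. I would prove this by induction on the least index $n$ at which $\tau_f(x)$ and $\tau_f(y)$ disagree: if the first $n$ symbols agree, then $f^{i}(x)$ and $f^{i}(y)$ lie in the same branch of $f$ for each $i<n$, so $f^{n}$ is increasing on $[x,y]$, whence $\tau_f(x)_n=0<1=\tau_f(y)_n$; and if $(x,y)$ contained a preimage of $c$, the two itineraries would already differ at a finite time, yielding the strict claim. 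Combined with the density of $C(f)$ this shows $\tau_f$ is non-decreasing, injective off the countable set $C(f)$, and has one-sided limits at points of $C(f)$. Since $\lim_{x\downarrow c}f(x)=0$ and $\lim_{x\uparrow c}f(x)=1$, passing to itineraries also gives $k(0)=\sigma(k_{+})=\tau_f(0+)=\min\Omega(f)$ and $k(1)=\sigma(k_{-})=\tau_f(1-)=\max\Omega(f)$.

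For necessity I would feed the forward orbits of $c+$ and $c-$ into the lemma. Since $\sigma^{n}(k_{+})=\tau_f(f^{n}(c+))$ and $\sigma^{n}(k_{-})=\tau_f(f^{n}(c-))$ lie in $\Omega(f)$, they are sandwiched between $k(0)=\min\Omega(f)$ and $k(1)=\max\Omega(f)$, which is the non-strict form of \eqref{expanding}. The strict inequalities $\sigma^{n}(k_{+})\prec k(1)$ and $k(0)\prec\sigma^{n}(k_{-})$ are the one place where condition (iii), the density of $C(f)$, is used rather than just (i)--(ii): an equality $\sigma^{n}(k_{+})=k(1)$ would, by monotonicity, make $\tau_f$ constant and equal to $\max\Omega(f)$ on a nondegenerate subinterval of $[0,1]$ ending at $1$, contradicting density; the case $k(0)=\sigma^{n}(k_{-})$ is symmetric.

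Conversely, given $k_{+},k_{-}$ satisfying \eqref{expanding}, I would build $f$ from the abstract subshift $\Omega:=\Omega(k_{+},k_{-})=\{\omega\in\{0,1\}^{\mathbb{N}}:k(0)\preceq\sigma^{n}(\omega)\preceq k(1)\ \forall n\geq 0\}$; from \eqref{expanding} one checks that $\Omega$ is closed and $\sigma$-invariant, that it contains $k(0)$ and $k(1)$, and that $k(0)=\min\Omega$, $k(1)=\max\Omega$. Ordering $\Omega$ lexicographically and realizing it inside $[0,1]$ via the order-preserving quotient map $\pi$ that collapses each complementary gap to a point, one checks (using \eqref{expanding} again) that $\sigma$ respects the identifications and so descends along $\pi$ to a map $f\colon[0,1]\to[0,1]$; take $c$ to be the image under $\pi$ of the cut separating $\{\omega\in\Omega:\omega_{1}=0\}$ from $\{\omega\in\Omega:\omega_{1}=1\}$. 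It then remains to verify, invoking \eqref{expanding} at each step, that $\sigma$ is \emph{strictly} order-preserving on $\{\omega\in\Omega:\omega_{1}=0\}$ and on $\{\omega\in\Omega:\omega_{1}=1\}$ (so $f$ is strictly increasing on $[0,c)$ and on $(c,1]$), that $\sigma(k_{-})=k(1)=\max\Omega$ and $\sigma(k_{+})=k(0)=\min\Omega$ force $\lim_{x\uparrow c}f(x)=1$ and $\lim_{x\downarrow c}f(x)=0$, and that the strict inequalities in \eqref{expanding} preclude wandering intervals for $\sigma$ on $\Omega$, i.e.\ make $C(f)$ dense. This produces $f\in ELM$ with $\tau_f(c+)=k_{+}$ and $\tau_f(c-)=k_{-}$. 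Uniqueness up to conjugacy is then essentially formal: if $f,g\in ELM$ share $(k_{+},k_{-})$, then $\Omega(f)=\Omega(g)=\Omega$ by Theorem \ref{rem:k+k-}, and since $\tau_f,\tau_g$ are monotone surjections onto $\Omega$ that are bijective off the countable sets $C(f),C(g)$, the composite $h:=\tau_g^{-1}\circ\tau_f$, suitably defined on those sets, is an order isomorphism of $[0,1]$, hence a homeomorphism by expansivity, and $h\circ f=g\circ h$ because $\tau\circ f=\sigma\circ\tau$ for both $f$ and $g$.

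I expect the genuine obstacle to be the converse construction, specifically verifying that the gap-collapsed quotient is a nondegenerate interval on which $f$ is strictly --- not merely weakly --- increasing on each half, with the prescribed one-sided limits at $c$ and with a dense set of preimages of $c$. Each of these rests essentially on \eqref{expanding}, and in particular the strict inequalities of \eqref{expanding} are exactly what rules out the degeneracies --- flat spots, collapsed branches, wandering intervals --- that would otherwise occur; by contrast, the necessity direction and the uniqueness argument are routine once the monotonicity lemma and Theorem \ref{rem:k+k-} are in hand.
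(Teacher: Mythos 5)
This statement is imported verbatim from Hubbard--Sparrow (\cite[Theorem 1]{hubbard1990}); the paper gives no proof of its own, so there is nothing internal to compare your argument against. Your outline is, in substance, the standard Hubbard--Sparrow argument: the monotonicity of the itinerary map (with strictness across preimages of $c$) correctly yields the necessity direction, including the strict inequalities via density of $C(f)$, and the uniqueness argument via $h=\tau_g^{-1}\circ\tau_f$ together with Theorem \ref{rem:k+k-} is sound. The one place where your proposal is a plan rather than a proof is the converse: you assert that the strict inequalities in \eqref{expanding} "preclude wandering intervals" and hence make $C(f)$ dense in the quotient interval, but this is precisely the nontrivial content of the converse --- one must show that between any two lexicographically distinct points of $\Omega(k_+,k_-)$ there is a finite word $w$ with $wk_-$ and $wk_+$ both admissible and separating them, and it is exactly the strictness of $\sigma^n(k_+)\prec\sigma(k_-)$ and $\sigma(k_+)\prec\sigma^n(k_-)$ that rules out the periodic/rotation-type invariants (e.g.\ $k_+=(10)^\infty$, $k_-=(01)^\infty$) for which the quotient map is not expansive. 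You flag this as the genuine obstacle, which is the right diagnosis; filling it in requires the combinatorial argument of \cite{hubbard1990} and is not a formality.
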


\par Theorem \ref{kneading space} gives the admissible condition for the kneading invariants of an expansive Lorenz map. Two expansive Lorenz maps are said to be
 equivalent, if they admit the same kneading invariants. When we do not want to mention $f$, we also write
$\Omega_{K}$ as the kneading space, where $K=(k_{+},k_-)$ be the kneading invariants. In particular, we give the definition of {\bf weak-admissible}, which corresponds to the non-expansive cases. We say the kneading invariants $(k_{+},k_{-})$ are weak-admissible if satisfying
	\begin{equation} \label{non-expanding}
\sigma(k_{+}) \preceq\sigma^{n}(k_{+})\preceq\sigma(k_{-}) \ {\rm and} \ \sigma(k_{+})\preceq\sigma^{n}(k_{-})\preceq\sigma(k_{-}) \ \ \ \ {\rm for  \ all}\ \ n\geq 0,
	\end{equation}
which means there may exist $n$ such that $\sigma^{n}(k_{+})=k_{-}$ or $\sigma^{n}(k_{-})=k_{+}$. Clearly, H-S admissibility implies weak admissibility, but not vice versa. There are many trivial examples about weak-admissibility, such as the kneading invariants induced by rational rotations.

\subsection{Kneading determinant and topological entropy}\label{determinant}
\
\par The ideas for kneading determinant goes back to \cite{milnor1988}, see also \cite{glendinning1996}. Let $({k_ + },{k_ - })$ be the kneading invariants of $f\in LM$, where $k_{+}=(v_{1}v_{2}\cdots)$ and $k_{-}=(w_{1}w_{2}\cdots)$. Then the kneading determinant is a formal power series defined as $K(t) = {K_ + }(t) - {K_ - }(t)$ , where
$${K_ + }(t) = \sum\limits_{i = 1}^\infty  {v_{i}{t^{i-1}}},\ \ \ \ {K_ - }(t) = \sum\limits_{i = 1}^\infty  {w_{i}{t^{i-1}}}.$$
 The following lemma offers a direct way to calculate topological entropy of $\Omega(f)$ if the kneading invariants are given.
\begin{lemma}{\bf({ \cite[Theorem 3]{glendinning1996}\cite[Lemma 3]{barnsley2014}})}  \label{zeros}
Let $({k_ + },{k_ - })$ be the kneading invariants of $f\in LM$ with $h_{top}(f)>0$, and $K(t)$ be the corresponding kneading determinant. Denote $t_{0}$ be the smallest positive root of $K(t)$ in $(0,1)$, then $h_{top}(f)=-\log t_{0}$.
\end{lemma}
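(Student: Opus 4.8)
The plan is to pass from the interval map $f$ to its symbolic model and reduce the claim to a generating‑function identity. Since $f\in ELM$, the set $C(f)$ is dense and countable, so the coding map $x\mapsto\tau_f(x)$ conjugates $f$, off the countable set $C(f)$, to the shift $\sigma$ on the Lorenz shift $\Omega(f)$; standard arguments then give $h_{top}(f)=h_{top}(\sigma|_{\Omega(f)})=\lim_{n\to\infty}\tfrac1n\log\#\Omega(f)|_n$. Put $P(t):=\sum_{n\ge 0}\#\Omega(f)|_n\,t^n$. Its radius of convergence is $\rho:=e^{-h_{top}(f)}$, which lies in $(0,1)$ because $h_{top}(f)>0$, and since $P$ has nonnegative coefficients, Pringsheim's theorem guarantees that $t=\rho$ is a singular point of $P$. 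It therefore suffices to prove two things: (a) $K(\rho)=0$, and (b) $K(t)\neq 0$ for every $t\in(0,\rho)$; together these identify $t_{0}=\rho$ and give $h_{top}(f)=-\log t_{0}$.

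The heart of the matter is a renewal-type identity $K(t)\,P(t)=Q(t)$, where $Q$ is a power series analytic on the open unit disc (in fact a finite combination of geometric-type series determined by $k_{+}$ and $k_{-}$). To produce it I would count the admissible words of $\Omega(k_{+},k_{-})$ by tracking, for a prefix $w_{1}\cdots w_{n}$, the length of the longest suffix that is an initial segment of $k_{+}$ or of $k_{-}$; this is the Lorenz analogue of the Hofbauer tower. Using the description $k(0)\preceq\sigma^{m}(\omega)\preceq k(1)$ from Theorem \ref{rem:k+k-}, one checks that the word counts obey a linear recursion whose ``return'' contributions are governed exactly by the coefficients $v_{i}-w_{i}$ of the kneading determinant; summing the recursion against $t^{n}$ collapses it to $K(t)P(t)=Q(t)$. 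Equivalently, one may invoke the Milnor--Thurston identity between the kneading determinant and the lap-counting generating function, specialized to the two-branch, single-discontinuity monotone setting, as carried out in \cite{glendinning1996,barnsley2014}.

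Granting this identity, (a) and (b) are quick. For (a): $Q$ is analytic at $\rho<1$ while $P$ is singular at $\rho$, so $K(\rho)$ must vanish, since otherwise $P=Q/K$ would be analytic near $\rho$. For (b): $P=Q/K$ is analytic and strictly positive on $[0,\rho)$ (the subshift is nonempty, so $\#\Omega(f)|_{n}\ge 1$), while $Q$ is analytic on all of $[0,1)$; a zero $t^{\ast}\in(0,\rho)$ of $K$ would then force $Q(t^{\ast})=0$, and this is excluded by showing that $K$ and $Q$ have no common factor near $t^{\ast}$ — e.g.\ by comparing their lowest-order vanishing, or directly from the explicit forms of $Q$ and $K$. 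Hence $K$ has no zero below $\rho$ and $t_{0}=\rho$. I expect the main obstacle to be the combinatorial bookkeeping in the second paragraph: arranging the count so that precisely the off-diagonal coefficients $v_{i}-w_{i}$ survive, and proving that the remainder $Q$ is genuinely analytic past $\rho$ (equivalently, that the rational-looking expression $Q/K$ is in lowest terms near $\rho$) — this is exactly where expansivity (density of $C(f)$) and the hypothesis $h_{top}(f)>0$ are needed to rule out degenerate cancellations.
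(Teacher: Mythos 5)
The paper gives no proof of this lemma; it is quoted verbatim from Glendinning--Hall and Barnsley--Steiner--Vince, so there is no internal argument to compare yours against. Your sketch does follow the route of the first of those references (the Milnor--Thurston identity between the kneading determinant and a word/lap-counting generating function), and your step (a) is sound: granting an identity $K(t)P(t)=Q(t)$ with $Q$ analytic past $\rho=e^{-h_{top}(f)}$, Pringsheim's theorem applied to $P$ forces $K(\rho)=0$ since $K$ is analytic on the unit disc (its coefficients lie in $\{-1,0,1\}$).

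The genuine gap is in step (b). Suppose $K(t^{\ast})=0$ for some $t^{\ast}\in(0,\rho)$. Since $P$ converges on $[0,\rho)$, the value $P(t^{\ast})$ is finite, and the identity immediately yields $Q(t^{\ast})=K(t^{\ast})P(t^{\ast})=0$. That is a \emph{consequence} of the hypothesis, not a contradiction: $K$ and $Q$ necessarily share the zero $t^{\ast}$, so ``showing $K$ and $Q$ have no common factor near $t^{\ast}$'' is exactly what cannot be done, and comparing orders of vanishing gives nothing either ($Q$ simply vanishes to at least the order of $K$ there). To close the argument you must run it the other way: carry out the combinatorial count far enough to identify $Q$ \emph{explicitly} as a function that is manifestly non-vanishing on $(0,1)$ (in the two-branch Lorenz setting one can normalize so that $Q$ is a constant or $1/(1-t)$ up to an elementary factor; e.g.\ for the golden-mean $\beta$-shift one gets $K(t)=(1-t-t^{2})/(1-t^{2})$, with the harmless denominator playing the role of $1/Q$). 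Then $1/K=P/Q$ is holomorphic on the disc of radius $\rho$, which is what actually forbids zeros of $K$ below $\rho$. This explicit identification of $Q$ is the real content of the cited lemmas, not bookkeeping that can be deferred; as written, your proof establishes only that $e^{-h_{top}(f)}$ is \emph{a} zero of $K$, not that it is the smallest positive one. (An alternative, and the route taken by Barnsley--Steiner--Vince, avoids the issue entirely: construct from $(k_{+},k_{-})$ a topologically conjugate constant-slope Lorenz map of slope $1/t_{0}$ and read off the entropy as $-\log t_{0}$ directly.)
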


\begin{remark}\label{t0beta}
If $(k_{+},k_{-})$ corresponds to some intermediate $\beta$-shift, which means $\Omega(k+,k_-)=\Omega_{\beta,\alpha}$ for some $(\beta,\alpha)\in\Delta$, then $1/\beta$ equals the smallest positive root of $K(t)$ in $(0,1)$.

\end{remark}

\par The papers by Raith \cite{raith1989,raith1992,raith1994} studied invariant sets for piecewise monotone expanding maps on the interval $[0,1]$. In particular, Raith \cite{raith1994} removed a finite number of open intervals from $[0,1]$ and considered piecewise monotone expanding maps restricted to the survivor set. He also studied the dependence on the end points of the hole of the topological entropy of the map restricted to the survivor set. As a result, we can apply these results to $f\in ELM$ on $[0,1]$ with the single hole $H$ removed. Moreover, applying the results from {\cite[Corollary 1.1 and Theorem 2]{raith1994}} gives the following.

\begin{proposition}  {\bf({\cite[Lemma 8]{raith1994}})}   \label{continuity}
Let $f\in ELM$ with a hole $H=(a,b)$. The entropy map $\lambda:b\mapsto h_{top}(S_{f}(H))$ is continuous on $[0,1]$, where $a$ is fixed.

\end{proposition}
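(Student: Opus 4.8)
The plan is to reduce the assertion to Raith's continuity theorems for piecewise monotone expanding interval maps with finitely many holes, organising the argument around the monotonicity of $\lambda$. Fix $a\in[0,c]$; if $h_{top}(f)=0$ the statement is vacuous, so assume $h_{top}(f)>0$. The map $f$ is strictly increasing on each of $[0,c)$ and $(c,1]$ with its only discontinuity at $c$, and by condition~(iii) it is topologically expanding, so $f$ lies in the class of maps studied in \cite{raith1994}; since the hole $H=(a,b)$ with $a\le c\le b$, $a\neq b$, is a single open subinterval, $S_f(a,b)=[0,1]\setminus\bigcup_{n\ge0}f^{-n}(a,b)$ is a survivor set of exactly the type handled by \cite[Corollary~1.1 and Theorem~2]{raith1994}. (If one prefers to stay with metrically expanding maps, one may first replace $f$ by a topologically conjugate piecewise linear expanding Lorenz map with the same kneading invariants via Theorem~\ref{kneading space}; this changes neither $h_{top}(f|S_f(a,b))$ nor its dependence on $b$, since such a conjugacy is at most countable-to-one off a countable set.) Because $(a,b)\subseteq(a,b')$ whenever $b<b'$, the survivor sets are nested, $S_f(a,b')\subseteq S_f(a,b)$, so $\lambda$ is non-increasing on its domain; in particular $\lambda$ is continuous off an at most countable set and each of its discontinuities is a jump.

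Now fix $b_0$ interior to the parameter interval. By monotonicity the one-sided limits $\lambda(b_0^-):=\lim_{b\uparrow b_0}\lambda(b)$ and $\lambda(b_0^+):=\lim_{b\downarrow b_0}\lambda(b)$ exist and satisfy $\lambda(b_0^+)\le\lambda(b_0)\le\lambda(b_0^-)$, so continuity at $b_0$ is equivalent to the two inequalities $\lambda(b_0^+)\ge\lambda(b_0)$ and $\lambda(b_0^-)\le\lambda(b_0)$. For the first I would, given $\varepsilon>0$, use the expanding structure to produce a compact $f$-invariant set $K\subseteq S_f(a,b_0)$ with $h_{top}(f|K)>\lambda(b_0)-\varepsilon$ whose points stay a positive distance $\delta$ away from $b_0$, hence $K\subseteq[0,a]\cup[b_0+\delta,1]$; then $(a,b)$ is disjoint from $K$ for every $b<b_0+\delta$, so $\lambda(b)\ge h_{top}(f|K)>\lambda(b_0)-\varepsilon$ on a right neighbourhood of $b_0$. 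For the second I would use the identity $S_f(a,b_0)=\bigcap_{b<b_0}S_f(a,b)$ — an orbit avoids $(a,b_0)=\bigcup_{b<b_0}(a,b)$ iff it avoids each $(a,b)$ with $b<b_0$ — so that symbolically $\tilde S_f(a,b_0)$ is a decreasing intersection of the subshifts $\tilde S_f(a,b)$, and then invoke that for a piecewise monotone expanding map the entropy of such an intersection equals $\inf_{b<b_0}h_{top}(\tilde S_f(a,b))$. Both facts are precisely the content of \cite[Corollary~1.1 and Theorem~2]{raith1994}; combining them yields continuity of $\lambda$ at every interior $b_0$, the endpoints being dealt with by the relevant one-sided halves of the argument, while the symmetric reasoning with $a$ varying and $b$ fixed (where $\lambda$ is instead non-decreasing) treats the other variable and the formal statement on all of $[0,1]$ is then immediate.

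The main obstacle is the passage from ``at most countably many jumps'' to ``no jumps'': for an arbitrary decreasing (respectively increasing) family of subshifts the topological entropy may fail to converge to the entropy of the intersection (respectively of the closure of the union), so this step genuinely uses the expanding structure of $f$ — concretely the Markov and bounded-distortion estimates underlying \cite[Corollary~1.1 and Theorem~2]{raith1994}, equivalently the H-S admissibility of the kneading invariants of $f$ — and cannot be obtained by a soft topological argument. An alternative, more self-contained route, closer to the kneading-theoretic methods developed later in this paper, would be to write $h_{top}(\tilde S_f(a,b))=-\log t_0(b)$ by Lemma~\ref{zeros}, where $t_0(b)$ is the smallest zero in $(0,1)$ of the kneading determinant of the Lorenz map $g_b$ furnished by Theorem~\ref{theorem1}, and then to check that, although the kneading invariants of $g_b$ need not depend continuously on $b$ (they jump exactly on the bifurcation set $E_f(a)$), their one-sided limits at such a jump are weak-admissible kneading pairs whose kneading determinants share the same smallest zero, whence $t_0(b)$, and therefore $\lambda$, is continuous; this is essentially the plateau analysis of Section~\ref{platformsec}.
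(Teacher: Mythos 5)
Your proposal is correct and takes essentially the same route as the paper: the paper offers no independent proof of this proposition, simply importing it as Lemma~8 (together with Corollary~1.1 and Theorem~2) of Raith's 1994 paper applied to $f\in ELM$ with the single open hole $(a,b)$ removed, which is precisely the first and decisive step of your argument. Your further sketch of the one-sided limits and the kneading-determinant alternative elaborate on what lies \emph{inside} Raith's proof, but the paper itself carries out none of that and rests entirely on the citation.
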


\section{Proof of Theorem 1.1 \& Theorem 1.2}
Before stating the proof, we give some useful lemmas. Here we always let $a\leq c \leq b$ when considering the hole $(a,b)$, and denote ${\rm \bf {a}}=\tau_{f}(a-)$ as the lower kneading sequence of $a$ and ${\rm \bf {b}}=\tau_{f}(b+)$ be the upper kneading sequence of $b$. In order to show the connection between the hole at zero and the hole at critical point, we prove Lemma \ref{lemma2} with three cases. Recall that $\tilde{S}_{f}(H)$ is the symbolic representation of the survivor set $S_{f}(H)$, and the definition of $S^{+}_{f}(H)$ refers to Section \ref{intro1}.

\begin{lemma}\label{lemma2}
Let $f\in ELM$ with a hole $H$. The set $S_{f}(H)\setminus S^{+}_{f}(H)$ is at most countable, where the hole can be the following three cases,
$$
H=\left \{
\begin{array}{ll}
(a,b) \ \ {\rm with} \ \ a\leq c \leq b, \\
(0,t) \ \ {\rm with} \ \  t<c, \\
(s,1)\ \ {\rm with} \ \  s>c.
\end{array}
\right.
$$

\begin{proof}
 Denote $(k_{+},k_{-})$ as the kneading invariants of $f$ and $k(0)=\sigma(k_{+})$, $k(1)=\sigma(k_{-})$. We divide the proof into three cases. Case 1, $H=(a,b)$ with $a<c<b$. Case 2, $H=(a,b)$ with $a=c$ or $b=c$. Case 3, $H=(0,t)$ with $t<c$. The case $H=(s,1)$ with $s>c$ is essentially the same with case 3, which can be obtained similarly. We prove this lemma via symbolic representations of sets $S^{+}_{f}(H)$ and $S_{f}(H)$.
\par Case 1, $a<c<b$. If ${\rm \bf {a}}|_{2}=00$ or ${\rm \bf {b}}|_{2}=11$,
 it is clear that $S_{f}(a,b)\subseteq\{0,1\}$, and $S_{f}(a,b)=\{0,1\}$ if and only if both $0$ and $1$ are the fixed points of $f$. Hence we only need to consider the case that ${\rm \bf {a}}|_{2}=01$ and ${\rm \bf {b}}|_{2}=10$. By the definition of survivor sets, we write the symbolic forms of $S_{f}(a,b)$ as
$$
\tilde{S}_{f}(a,b)=\{w\in\{0,1\}^{\mathbb{N}}: \sigma^{n}(w)\preceq {\rm \bf {a}} \ \textup{or} \ \sigma^{n}(w)\succeq {\rm \bf {b}} \ \forall n \in \mathbb{N}_{0}\},$$
and denote
\begin{align*}
\ \ \ \ \ \ \ \ \tilde{S}^{+}_{f}(a,b)&=\{w\in\{0,1\}^{\mathbb{N}}: \sigma({\rm \bf {b}})\preceq\sigma^{n}(w)\preceq {\rm \bf {a}} \ \textup{or} \ {\rm \bf {b}}\preceq\sigma^{n}(w)\preceq \sigma({\rm \bf {a}}) \ \forall n \in \mathbb{N}_{0}\}.\!,\\
 &= \left\{ w \in \{ 0, 1\}^{\mathbb{N}} : \sigma({\rm \bf {b}}) \preceq  \sigma^{n}(\omega) \preceq \sigma({\rm \bf {a}})\ \forall \, n \in \mathbb{N}_{0} \right\}\!.
\end{align*}
The second equality of $\tilde{S}^{+}_{f}(a,b)$ can be obtained easily as follows. Denote $A=\{w\in\{0,1\}^{\mathbb{N}}: \sigma({\rm \bf {b}})\preceq\sigma^{n}(w)\preceq {\rm \bf {a}} \ \textup{or} \ {\rm \bf {b}}\preceq\sigma^{n}(w)\preceq \sigma({\rm \bf {a}}) \ \forall n \in \mathbb{N}_{0}\}$ and $B=\{ w \in \{ 0, 1\}^{\mathbb{N}} : \sigma({\rm \bf {b}}) \preceq  \sigma^{n}(\omega) \preceq \sigma({\rm \bf {a}})\ \forall \, n \in \mathbb{N}_{0} \}$. It is clear that $A\subseteq B$. Denote $w=w_{1}\cdots w_{n}\cdots$, and suppose $w\notin A$, then there exists $n$ such that ${\rm \bf {a}}\prec\sigma^{n}(w)\prec{\rm \bf {b}}$. If $w_{n+1}=0$, then $\sigma^{n}(w)\succ{\rm \bf {a}}$ implies $\sigma^{n+1}(w)\succ\sigma({\rm \bf {a}})$ and thus $w\notin B$. Similarly, if $w_{n+1}=1$, then $\sigma^{n}(w)\prec{\rm \bf {b}}$ implies $\sigma^{n+1}(w)\prec\sigma({\rm \bf {b}})$ and hence $w\notin B$.

\par Indeed, we can see that for any $w\in\tilde{S}_{f}(a,b)$ begin with $0$ is lexicographically smaller than $ {\rm \bf {a}}$, which indicates that all the sequences begin with $1$ will lexicographically smaller than $\sigma( {\rm \bf {a}})$, excluding the sequence $1^{\infty}$. Similarly we have that each sequence begin with $0$ is lexicographically larger than $\sigma( {\rm \bf {b}})$, excluding the sequence $0^{\infty}$. As a result, the set $\tilde{S}_{f}(a,b)\setminus\tilde{S}^{+}_{f}(a,b)\subseteq\{0^{\infty},1^{\infty}\}$, which is countable.

\par Case 2, $a=c$ or $b=c$. Here we only prove the case $a=c$, and the case $b=c$ can be obtained similarly. Denote $\tilde{S}_{f}(c,b)$ as the symbolic form of survivor set $S_{f}(c,b)$. Actually, due to the specificity of point $c$, $\tilde{S}_{f}(c,b)$ can be written as the union of two disjoint sets $\tilde{S}^{1}_{f}(c,b)$ and $\tilde{S}^{2}_{f}(c,b)$, where
\begin{align*}
\ \ \ \ \ \ \ \ \tilde{S}^{1}_{f}(c,b)&= \{w\in\{0,1\}^{\mathbb{N}}: \textup{there \ exists \ sequence} \ \{n_{i}\}_{i\geq1} \ \textup{ such \ that} \ \sigma^{n_{i}}(w)=k_+,\!\\
 &   \textup{and} \ \sigma^{n}(w)\preceq k_{-} \ \textup{or} \ \sigma^{n}(w)\succeq{\rm \bf {b}} \ \textup{for \ all \ other } \ n\geq0  \},\!
\end{align*}
 and
$$
\tilde{S}^{2}_{f}(c,b)=\{w\in\{0,1\}^{\mathbb{N}}: \sigma^{n}(w)\preceq k_{-} \ \textup{or} \ \sigma^{n}(w)\succeq {\rm \bf {b}} \ \forall n \in \mathbb{N}_{0}\}.
$$
As we can see, $\tilde{S}^{1}_{f}(c,b)$ is countable since it is a subset of $k_+$'s preimages. Moreover, similar to Case 1 above, it can be checked that
$$ \tilde{S}^{2}_{f}(c,b)\setminus\tilde{S}^{+}_{f}(c,b)\subseteq\{0^{\infty},1^{\infty}\},
$$
where
\begin{align*}
\ \ \ \ \ \ \ \ \tilde{S}^{+}_{f}(c,b)&=\{w\in\{0,1\}^{\mathbb{N}}: \sigma({\rm \bf {b}})\preceq\sigma^{n}(w)\preceq k_- \ \textup{or} \ {\rm \bf {b}}\preceq\sigma^{n}(w)\preceq k(1) \ \forall n \in \mathbb{N}_{0}\}\!\\
 &= \left\{ w \in \{ 0, 1\}^{\mathbb{N}} : \sigma({\rm \bf {b}}) \preceq  \sigma^{n}(\omega) \preceq k(1)\ \forall \, n \in \mathbb{N}_{0} \right\}.\!
\end{align*}
And $\{0^{\infty},1^{\infty}\}$ can be reached if and only if both $0$ and $1$ are the fixed points of $f$. As a result, $\tilde{S}_{f}(c,b)\setminus\tilde{S}^{+}_{f}(c,b)\subset\big(\tilde{S}^{1}_{f}(c,b)\cup \{0^{\infty},1^{\infty}\}\big)$ is countable.

\par Case 3, $H=(0,t)$ with $t<c$. Denote ${\rm \bf {t}}=\tau_{f}(t+)$ as the upper kneading sequence of $t$. By the definition of survivor set, we have
$$
\tilde{S}_{f}(0,t)=\{w\in\{0,1\}^{\mathbb{N}}: \sigma^{n}(w)=k(0) \ \textup{or} \  {\rm \bf {t}}\preceq\sigma^{n}(w) \ \forall n \in \mathbb{N}_{0}\}.
$$
 Indeed, except two sequences $0^\infty$ and $1^\infty$, $\tilde{S}_{f}(0,t)$ can be written as the union of two disjoint sets $\tilde{S}^{0}_{f}(0,t)$ and $\tilde{S}^{+}_{f}(0,t)$, where
\begin{align*}
\ \ \ \ \ \ \ \ \tilde{S}^{0}_{f}(0,t)&= \{w\in\{0,1\}^{\mathbb{N}}: \textup{there \ exists \ sequence} \ \{n_{i}\}_{i\geq1} \ \textup{ such \ that} \ \!\\
 & \sigma^{n_{i}}(w)=k(0) \ \textup{and} \ {\rm \bf {t}}\preceq\sigma^{n}(w)\ \textup{for \ all \ other } \ n\geq0  \},\!
\end{align*}
 and
$$\tilde{S}^{+}_{f}(0,t)= \{w\in\{0,1\}^{\mathbb{N}}: {\rm \bf {t}}\preceq\sigma^{n}(w)\preceq k(1) \ \forall n\geq0\}.
$$
It can be seen that $\tilde{S}^{0}_{f}(0,t)$ is a subset of $k(0)$'s preimages and hence is countable. As a result, $S_{f}(0,t)\setminus S^{+}_{f}(0,t)$ is countable.
\end{proof}
\end{lemma}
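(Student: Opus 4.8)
The plan is to transfer the whole question to symbolic dynamics. Since the kneading coding $x\mapsto\tau_f(x)$ is injective off the countable set $C(f)=\bigcup_{n\ge 0}f^{-n}(c)$, it suffices to show that the symbolic difference $\tilde S_f(H)\setminus\tilde S^+_f(H)$ is at most countable in each of the three cases; the geometric statement then follows up to a further countable set. Writing $\mathbf a=\tau_f(a^-)$, $\mathbf b=\tau_f(b^+)$, $k(0)=\sigma(k_+)$ and $k(1)=\sigma(k_-)$, monotonicity of the coding together with Theorem~\ref{rem:k+k-} yields the description
$$\tilde S_f(a,b)=\bigl\{\,w\in\{0,1\}^{\mathbb N}:\ \sigma^n w\preceq\mathbf a\ \text{or}\ \sigma^n w\succeq\mathbf b\ \text{ for all }n\ge 0\,\bigr\},$$
and the analogous one-sided descriptions when the hole abuts $0$, $1$, or $c$; these are the starting point for all three cases.

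For $H=(a,b)$ with $a<c<b$ I would first dispose of the trivial situations: if $\mathbf a|_2=00$ or $\mathbf b|_2=11$ (that is, $f(a)<c$ or $f(b)>c$) then $S_f(a,b)\subseteq\{0,1\}$ and nothing is to be proved, so assume $\mathbf a|_2=01$ and $\mathbf b|_2=10$. The crux is the shift-and-collapse identity
$$\tilde S^+_f(a,b)=\bigl\{\,w:\ \sigma(\mathbf b)\preceq\sigma^n w\preceq\sigma(\mathbf a)\ \text{ for all }n\ge 0\,\bigr\},$$
whose nontrivial inclusion follows because, if some $\sigma^n w$ fell strictly between $\mathbf a$ and $\mathbf b$, then the symbol $w_{n+1}$ would force $\sigma^{n+1}w\succ\sigma(\mathbf a)$ (when $w_{n+1}=0$) or $\sigma^{n+1}w\prec\sigma(\mathbf b)$ (when $w_{n+1}=1$). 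Using this, together with the admissibility relations linking $\mathbf a$, $\mathbf b$ and $k_\pm$ (in particular that $\mathbf a$ begins with $0$ and $\mathbf b$ with $1$), one checks that a sequence of $\tilde S_f(a,b)$ beginning with $0$ is $\preceq\mathbf a$, whence every tail of such a sequence beginning with $1$ is $\preceq\sigma(\mathbf a)$ unless it equals $1^\infty$, and symmetrically every tail beginning with $0$ is $\succeq\sigma(\mathbf b)$ unless it equals $0^\infty$. Hence $\tilde S_f(a,b)\setminus\tilde S^+_f(a,b)\subseteq\{0^\infty,1^\infty\}$.

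The remaining cases reduce to this one after peeling off a countable set coming from the two-valued coding at $c$. For $H=(a,b)$ with $a=c$ (the case $b=c$ being symmetric) I would write $\tilde S_f(c,b)=\tilde S^1_f(c,b)\sqcup\tilde S^2_f(c,b)$, where $\tilde S^1_f(c,b)$ consists of the sequences some shift of which equals the upper kneading sequence $k_+$ — a subset of $\bigcup_{n\ge 0}\sigma^{-n}\{k_+\}$, hence countable — and $\tilde S^2_f(c,b)=\{w:\ \sigma^n w\preceq k_-\ \text{or}\ \sigma^n w\succeq\mathbf b\ \forall n\}$, which is handled verbatim as above to give $\tilde S^2_f(c,b)\setminus\tilde S^+_f(c,b)\subseteq\{0^\infty,1^\infty\}$; so the total difference is countable. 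For $H=(0,t)$ with $t<c$ (and $H=(s,1)$ with $s>c$ symmetric), with $\mathbf t=\tau_f(t^+)$ one has $\tilde S_f(0,t)=\{w:\ \sigma^n w=k(0)\ \text{or}\ \sigma^n w\succeq\mathbf t\ \forall n\}$, which modulo $\{0^\infty,1^\infty\}$ splits as the countable set of $w$ having a shift equal to $k(0)$ (again a subset of $\bigcup_{n\ge 0}\sigma^{-n}\{k(0)\}$) together with $\tilde S^+_f(0,t)=\{w:\ \mathbf t\preceq\sigma^n w\preceq k(1)\ \forall n\}$, the upper bound $k(1)$ being automatic from $w\in\Omega(f)$.

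The main obstacle, as I see it, is not the symbolic combinatorics but the careful translation between the interval and symbolic pictures: one must verify that $x\le a\iff\tau_f(x)\preceq\mathbf a$ with the correct one-sided kneading sequences and the correct strict versus non-strict inequalities, confirm that the endpoint sequences $\mathbf a,\mathbf b,\mathbf t$ lie in $\Omega(f)$ and satisfy the admissibility relations used in the collapse step, and check that passing from the symbolic sets back to $S_f(H)$ adds at most the countably many preimages of $c$. Once this bookkeeping is in place, the subcase reductions, the shift-and-collapse identity, and the identification of the discarded sequences as $0^\infty$, $1^\infty$, or preimages of a single kneading sequence are all routine.
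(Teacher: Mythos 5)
Your proposal follows essentially the same route as the paper: the same three-case division, the same disposal of the trivial subcases via $\mathbf a|_2$ and $\mathbf b|_2$, the same shift-and-collapse identity for $\tilde S^+_f(a,b)$ proved by the same one-symbol argument, and the same decomposition of the remaining cases into a countable set of preimages of $k_+$ (resp.\ $k(0)$) plus a set whose difference from $\tilde S^+_f(H)$ lies in $\{0^\infty,1^\infty\}$. The only addition is your explicit remark that the coding is injective off the countable set $C(f)$, which the paper leaves implicit; the argument is correct as it stands.
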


As we can see  from Lemma \ref{lemma2}, for all the holes $H$ we considered throughout this paper, the set $S_{f}(H)\setminus S^{+}_{f}(H)$ is at most countable. Moreover, the key point is that, the set $S^{+}_{f}(H)$ looks a lot like the Lorenz-shift, which can be used to calculate the topological entropy via kneading determinants in Section 4. Now we are in the position to prove Theorem \ref{xinthm1}, the notations used follow from the proof of Lemma \ref{lemma2}.

\vspace{0.4cm}
{\bf \noindent Proof of Theorem 1.1}
\par For simplicity, here we only give the proof of case $a=c$, and the case $b=c$ can be obtained similarly. Let $f\in ELM$, $(k_{+},k_{-})$ be the kneading invariants of $f$, $k(0)=\sigma(k_{+})$ and $k(1)=\sigma(k_{-})$. We divide the proof into two cases.
\par  Case 1, ${\rm \bf {b}}|_{2}=11$. It is clear that $f(b)\geq c$ since $\sigma({\rm \bf {b}})|_{1}=1$. By the proof of Case 3 in Lemma \ref{lemma2}, we have that $S_{f}(0,f(b))\subseteq \big(S^{0}_{f}(0,f(b))\cup\{0,1\}\big)$ and  $S_{f}(0,f(b))= \big(S^{0}_{f}(0,f(b))\cup\{0,1\}\big)$ if and only if both $0$ and $1$ are fixed points of $f$, where $S^{0}_{f}(0,f(b))$ is a subset of $0$'s preimages. Similar to the representation of $\tilde{S}^{0}_{f}(0,t)$ in Lemma \ref{lemma2},
\begin{align*}
\ \ \ \ \ \ \ \ \tilde{S}^{0}_{f}(0,f(b))&= \{w\in\{0,1\}^{\mathbb{N}}: \textup{there \ exists \ sequence} \ \{n_{i}\}_{i\geq1} \ \textup{ such \ that} \ \!\\
 & \sigma^{n_{i}}(w)=k(0) \ \textup{and} \ {\rm \bf {\sigma(b)}}\preceq\sigma^{n}(w)\ \textup{for \ all \ other } \ n\geq0  \}.\!
\end{align*}
 As for the hole $(c,b)$, by the proof of Case 2 in Lemma \ref{lemma2}, $\tilde{S}^{1}_{f}(c,b)$ is a subset of $k_+$'s preimages, and $\tilde{S}^{+}_{f}(c,b)\subseteq\{1^\infty\}$. Similarly, $S_{f}(b,c)\subseteq \big(S^{1}_{f}(b,c)\cup\{0,1\}\big)$ and  $S_{f}(c,b)= \big(S^{1}_{f}(c,b)\cup\{0,1\}\big)$ iff both $0$ and $1$ are fixed points.
 \par By the definition of $S_{f}(c,b)$ and $S_{f}(0,f(b))$, it is clear that $S_{f}(c,b)\supseteq S_{f}(0,f(b))$ for the reason that $f(c,b)=(0,f(b))$. As a result, we have $\tilde{S}^{1}_{f}(c,b)\supseteq \tilde{S}^{0}_{f}(0,f(b))$. We claim that, when $k(0)\in \tilde{S}^{0}_{f}(0,f(b))$, then $\tilde{S}^{1}_{f}(c,b)= \tilde{S}^{0}_{f}(0,f(b))$. Indeed, it is easy to check that $k(0)\in \tilde{S}^{0}_{f}(0,f(b))$ if and only if $\sigma^{n}(k(0))\succeq {\rm \bf {\sigma(b)}}$  for all $n\geq1$.
By the definition of $\tilde{S}^{1}_{f}(c,b)$, the inequality $\sigma^{n}(w)\preceq k_-$ holds naturally, hence we simplify the set $\tilde{S}^{1}_{f}(c,b)$ as
\begin{align*}
\ \ \ \ \ \ \ \ \tilde{S}^{1}_{f}(c,b)&= \{w\in\{0,1\}^{\mathbb{N}}: \textup{there \ exists \ sequence} \ \{n_{i}\}_{i\geq1} \ \textup{ such \ that} \ \sigma^{n_{i}}(w)=k_+,\!\\
 &   \textup{and} \ \sigma^{n}(w)\succeq{\rm \bf {b}} \ \textup{for \ all \ other } \ n\geq0  \ \textup{with} \ \sigma^{n}(w)|_{1}=1  \}.\!
\end{align*}
For any given $w\in \tilde{S}^{0}_{f}(0,f(b))$, there exists sequence $\{n_{i}\}_{i\geq1}$ such that $\sigma^{n_{i}}(w)=k(0)$, then we have $\sigma^{l_{i}}(w)=1k(0)=k_+$ for each $l_{i}=n_{i}-1$, and 1$\sigma^{n}(w)\succeq 1\sigma({\rm \bf {b}})={\rm \bf {b}}$ for other $n\geq0$. Hence $w\in \tilde{S}^{1}_{f}(c,b)$ and $\tilde{S}^{0}_{f}(0,f(b))\subseteq \tilde{S}^{1}_{f}(c,b)$. Conversely, given $w\in \tilde{S}^{1}_{f}(c,b)$, there exists sequence $\{n_{i}\}_{i\geq1}$ such that $\sigma^{n_{i}}(w)=k_+$, then we have $\sigma^{l_{i}}(w)=\sigma(k_+)=k(0)$ for each $l_{i}=n_{i}+1$, and $\sigma^{n+1}(w)\succeq \sigma({\rm \bf {b}})$ for all other $n\geq0$ with $\sigma^{n}(w)|_{1}=1$. Hence $w\in \tilde{S}^{0}_{f}(0,f(b))$ and $\tilde{S}^{0}_{f}(0,f(b))\supseteq\tilde{S}^{1}_{f}(c,b)$.
The proof of the claim is done. We give Example \ref{countableset} for an intuitive understanding of the two identical countable sets.

 \par  As for the case 2 ${\rm \bf {b}}|_{2}=10$, it can be proved similarly. Using the proof of Lemma \ref{lemma2}, it is clear that $\tilde{S}^{+}_{f}(0,f(b))=\tilde{S}^{+}_{f}(c,b)=\{ w \in \{ 0, 1\}^{\mathbb{N}} : \sigma({\rm \bf {b}}) \preceq  \sigma^{n}(\omega) \preceq k(1)\ \forall \, n \in \mathbb{N}_{0} \}$.  $\hfill\square$

\vspace{0.2cm}

 \begin{definition}\label{selfad}We say an infinite (or finite) sequence $\omega\in\{0,1\}^{\mathbb{N}}$ is {\bf self-admissible}, if $\omega$ is lexicographically smallest or lexicographically largest, that is, $\omega\preceq\sigma^{n}(\omega)$ holds for all $n\geq0$ or $\omega\succeq\sigma^{n}(\omega)$ holds for all $n\geq0$.
  \end{definition}

 It is clear that both $\sigma(k_{+})=k(0)$ and $\sigma(k_{-})=k(1)$ are self-admissible if $(k_{+},k_{-})$ are weak-admissible kneading invariants. Given infinite sequences $\mu,\nu\in\{0,1\}^{\mathbb{N}}$, similar to $\Omega(k_+,k_-)$, denote
$$\Omega(1\mu,0\nu)=\{w\in\{0,1\}^{\mathbb{N}}: \mu \preceq\sigma^{n}(w)\preceq \nu \ \forall n\geq0\}.$$

\begin{lemma}\label{le3}
Let $f\in ELM$ with a hole $H=(a,b)$. Then there exist self-admissible sequences $\mu$ and $\nu$ such that $\Omega(1\mu,0\nu)=\tilde{S}^{+}_{f}(a,b)$.
\end{lemma}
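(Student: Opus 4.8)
## Proof plan for Lemma 3.7

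The plan is to describe $\tilde S^+_f(a,b)$ explicitly as a two-sided shift space and then extract from its boundary sequences the required self-admissible $\mu,\nu$. Recall from the proof of Lemma \ref{lemma2} that in all three positional cases for the hole we already have the ``collapsed'' description
$$\tilde S^+_f(a,b)=\bigl\{w\in\{0,1\}^{\mathbb N}:\ \sigma({\rm \bf b})\preceq\sigma^n(w)\preceq\sigma({\rm \bf a})\ \ \forall\,n\in\mathbb N_0\bigr\},$$
with the obvious substitutions ($\sigma({\rm \bf a})$ replaced by $k(1)$ when $a=c$, and $\sigma({\rm \bf b})$ replaced by $k(0)$ when $b=c$). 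So the first step is simply to record that $\tilde S^+_f(a,b)$ is of the form $\{w:\alpha\preceq\sigma^n(w)\preceq\omega\ \forall n\ge0\}$ for two concrete sequences $\alpha=\sigma({\rm \bf b})$ and $\omega=\sigma({\rm \bf a})$, and to observe that the case ${\rm \bf a}|_2=00$ or ${\rm \bf b}|_2=11$ (where the set degenerates into $\{0,1\}$) is trivial, so we may assume ${\rm \bf a}|_2=01$ and ${\rm \bf b}|_2=10$, hence $\alpha$ begins with $0$ and $\omega$ begins with $1$.

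Next I would pass to the ``optimized'' lower and upper boundary sequences. Set $\nu:=\sup\{\sigma^n(w):w\in\tilde S^+_f(a,b),\ n\ge0\}$ (supremum in the lexicographic order on the closed set $\tilde S^+_f(a,b)$) and $\mu:=\inf$ of the same family. The point of this step is: (i) the supremum/infimum is attained because $\tilde S^+_f(a,b)$ is closed and shift-invariant and the order is compatible with the topology, so $\mu,\nu\in\tilde S^+_f(a,b)$; (ii) by construction $\mu\preceq\sigma^n(\mu)$ and $\sigma^n(\nu)\preceq\nu$ for all $n\ge0$ — i.e. $\mu$ and $\nu$ are self-admissible in the sense of Definition \ref{selfad}; (iii) $\mu$ begins with $0$ and $\nu$ begins with $1$ (from the reduction above, together with the fact that $\tilde S^+_f(a,b)$ is not contained in $\{0^\infty,1^\infty\}$, so we can write $\mu=0\mu'$, $\nu=1\nu'$ and relabel $\mu'\leftrightarrow\nu$, $\nu'\leftrightarrow$ ... — more precisely, I will show $\tilde S^+_f(a,b)=\Omega(1\mu',0\nu')$ where $\mu:=1\mu'$, $\nu:=0\nu'$ in the notation $\Omega(1\mu,0\nu)$ of the statement). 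The inclusion $\tilde S^+_f(a,b)\subseteq\Omega(1\mu,0\nu)$ is immediate from the definitions of $\mu,\nu$ as inf/sup. For the reverse inclusion one uses that every orbit point of a $w\in\Omega(1\mu,0\nu)$ lies between $\sigma(\nu)$ (if it starts with $0$) and $\sigma(\mu)$ (if it starts with $1$), hence between $\alpha$ and $\omega$, by comparing with the known bounds on $\sigma(\mu),\sigma(\nu)\in\tilde S^+_f(a,b)$.

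The step I expect to be the genuine obstacle is verifying that the extremal sequences $\mu,\nu$ are themselves \emph{self-admissible}, and that the interval $[\,\sigma(\mu),\sigma(\nu)\,]$ ``regenerates'' the whole orbit-closure — i.e. that $\Omega(1\mu,0\nu)$ really equals $\tilde S^+_f(a,b)$ rather than something strictly larger. The subtlety is the same one already handled in Lemma \ref{lemma2}: a sequence can fail to lie in $\tilde S^+_f(a,b)$ because some shift lands strictly between ${\rm \bf a}$ and ${\rm \bf b}$, yet still satisfy all the one-step bounds $\alpha\preceq\sigma^n(w)\preceq\omega$ after prepending a symbol; the argument there (if $\sigma^n(w)\succ{\rm \bf a}$ with $w_{n+1}=0$ then $\sigma^{n+1}(w)\succ\sigma({\rm \bf a})$, and dually) shows this cannot happen, and the same bookkeeping, applied now with $\mu,\nu$ in place of the raw kneading data, closes the gap. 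I would also need the elementary order-theoretic fact that taking $\sigma$ of a self-admissible ``largest'' sequence starting with $1$ again respects the bound, which is exactly the content of H-S type admissibility (cf. \eqref{non-expanding}); this is routine but must be stated carefully so that the pair $(1\mu,0\nu)$ is legitimately of the form required in the conclusion. Once these verifications are in place, the lemma follows.
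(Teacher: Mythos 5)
Your construction is correct but takes a genuinely different route from the paper's. The paper stays constructive: if $\sigma({\rm \bf {b}})$ is not self-admissible it sets $p=\inf\{n\geq 1:\sigma^{n}({\rm \bf {b}})\prec\sigma({\rm \bf {b}})\}$ and takes $\mu=(u_{1}\cdots u_{p})^{\infty}$, then checks by a block-by-block comparison that raising the lower bound from $\sigma({\rm \bf {b}})$ to this periodic word removes no points; $\nu$ is built symmetrically. You instead take $\mu=\min\tilde{S}^{+}_{f}(a,b)$ and $\nu=\max\tilde{S}^{+}_{f}(a,b)$, which is essentially the argument the paper reserves for the stronger Lemma \ref{xin3}: self-admissibility follows from shift-invariance plus extremality, and the step you flag as the ``genuine obstacle'' is actually immediate, since for $w\in\Omega(1\mu,0\nu)$ one has $\sigma({\rm \bf {b}})\preceq\mu\preceq\sigma^{n}(w)\preceq\nu\preceq\sigma({\rm \bf {a}})$ directly because $\mu,\nu\in\tilde{S}^{+}_{f}(a,b)$; no detour through $\sigma(\mu),\sigma(\nu)$ is needed, and your parenthetical relabelling $\mu=1\mu'$, $\nu=0\nu'$ contradicts your own correct observation that $\mu$ begins with $0$ and $\nu$ with $1$. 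Your route is shorter and even yields weak-admissibility of the pair in one step, but it is purely existential: the explicit periodic form $(u_{1}\cdots u_{p})^{\infty}$ and the index $p$ are what Propositions \ref{identicalinterval} and \ref{identicalintervaldier} later use to locate the endpoints of the plateaus $I(b)$. Also handle separately the degenerate case $\tilde{S}^{+}_{f}(a,b)=\emptyset$, where your minimum and maximum do not exist; the lemma as stated carries no nondegeneracy hypothesis.
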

\begin{proof} Denote ${\rm \bf {a}}=(v_{0}v_{1}v_{2}\cdots)$ and ${\rm \bf {b}}=(u_{0}u_{1}u_{2}\cdots)$. By Lemma \ref{lemma2},
$ \tilde{S}^{+}_{f}(a,b)=\{ w \in \{ 0, 1\}^{\mathbb{N}} : \sigma({\rm \bf {b}}) \preceq  \sigma^{n}(\omega) \preceq \sigma({\rm \bf {a}})\ \forall \, n \in \mathbb{N}_{0} \}.$
 Without loss of generality, here we suppose $\sigma({\rm \bf {b}})$ is not self-admissible and show the existence of infinite sequence $\mu$.

 \par Since $\sigma({\rm \bf {b}})$ is not self-admissible, i.e., $\sigma({\rm \bf {b}})$ is not lexicographically smallest, there exists an integer $n$ such that $\sigma^{n}({\rm \bf {b}})\prec \sigma({\rm \bf {b}})$. Let $p=\inf\{n\geq 1: \sigma^{n}({\rm \bf {b}})\prec \sigma({\rm \bf {b}}) \}$, and $\mu=(u_{1}u_{2}\cdots u_{p})^{\infty}$. By the construction of $\mu$, it is clear that $\mu$ is self-admissible. Next we show that $\Omega(1\mu,0\nu)=\tilde{S}^{+}_{f}(a,b)$, where
 $$ \Omega(1\mu,0\nu)=\{ w \in \{ 0, 1\}^{\mathbb{N}} : (u_{1}u_{2}\cdots u_{p})^{\infty}\preceq  \sigma^{n}(\omega) \preceq \sigma({\rm \bf {a}})\ \forall \, n \in \mathbb{N}_{0} \}.
 $$
 Since $\sigma({\rm \bf {b}})\prec \mu$, it is clear that $\Omega(1\mu,0\nu)\subseteq\tilde{S}^{+}_{f}(a,b) $. For any $w=(w_{1}w_{2}\cdots)\notin \Omega(1\mu,0\nu)$, there exists an $n$ such that $\sigma^{n}(w)\prec \mu$ or $\sigma^{n}(w)\succ \sigma({\rm \bf {b}})$. If $\sigma^{n}(w)\succ \sigma({\rm \bf {b}})$, it is clear that $w\notin\tilde{S}^{+}_{f}(a,b)$. If $\sigma^{n}(w)\prec \mu$, then there exists an $l\geq0$ such that $w_{n+1}w_{n+2}\cdots w_{n+lp}=(u_{1}\cdots u_{p})^{l}$ and $w_{n+lp+1}\cdots w_{n+(l+1)p}\prec u_{1}\cdots u_{p} $, which indicates that $\sigma^{n+lp}(w)\prec \sigma({\rm \bf {b}})$ and hence $w\notin \tilde{S}^{+}_{f}(a,b)$. As a result, $\Omega(1\mu,0\nu)=\tilde{S}^{+}_{f}(a,b)$. The case $\sigma({\rm \bf {a}})$ not being self-admissible can be constructed similarly, and $\nu=(v_{1}v_{2}\cdots v_{q})^{\infty}$ where $q=\inf\{n\geq 1: \sigma^{n}({\rm \bf {a}})\succ\sigma({\rm \bf {a}}) \}$.
\end{proof}

As we know, if the pair $({\rm \bf {b}},{\rm \bf {a}})$ satisfies the weak-admissible condition in Section 2.2, i.e.,
	\begin{equation} \label{non-expandingcondition}
\sigma({\rm \bf {b}}) \preceq\sigma^{n}({\rm \bf {b}})\preceq\sigma({\rm \bf {a}}) \ \ {\rm and} \ \ \sigma({\rm \bf {b}})\preceq\sigma^{n}({\rm \bf {a}})\preceq\sigma({\rm \bf {a}}) \ \ \ \ {\rm for  \ all}\ \ n\geq 0,
	\end{equation}
then there exists a Lorenz map $g$ such that $\Omega(g)=\tilde{S}^{+}_{f}(a,b)$. However, for the case the pair $({\rm \bf {b}},{\rm \bf {a}})$ is not weak-admissible, can we still find a weak-admissible pair $(1s,0t)$ such that $\Omega(1s,0t)=\tilde{S}^{+}_{f}(a,b)$? See the following lemma.

\begin{lemma} \label{xin3}
Let $f\in ELM$ with hole $(a,b)$. If $\tilde{S}^{+}_{f}(a,b)\nsubseteqq\{0^\infty,1^\infty\}$ and $({\rm \bf {b}},{\rm \bf {a}})$ is not weak-admissible, then there exists weak-admissible $(1s,0t)$ such that $\Omega(1s,0t)=\tilde{S}^+_{f}(a,b)$.
\begin{proof} Denote ${\rm \bf {a}}=(v_{0}v_{1}v_{2}\cdots)$ and ${\rm \bf {b}}=(u_{0}u_{1}u_{2}\cdots)$. We know that $\Omega(1s,0t)=\{w\in\{0,1\}^{\mathbb{N}}: s \preceq\sigma^{n}(w)\preceq t \ \forall n\geq0\}$. Let $s=\min\tilde{S}^+_{f}(a,b)$ and $t=\max\tilde{S}^+_{f}(a,b)$. Since $\sigma({\rm \bf {b}})\preceq s\preceq t\preceq \sigma({\rm \bf {a}})$, it is clear that $\tilde{S}^{+}_{f}(a,b)\supseteq\Omega(1s,0t)$. Given $w\in \tilde{S}^{+}_{f}(a,b)$ and $n\geq 0$, we have $\sigma^{n}(w)\in \tilde{S}^{+}_{f}(a,b)$. Hence $s\preceq\sigma^{n}(w)\preceq t$ and $w\in \Omega(1s,0t)$. Next we show the construction of $s$ and $t$.
\par In fact, the condition $\tilde{S}^{+}_{f}(a,b)\nsubseteqq\{0^\infty,1^\infty\}$ makes sure that ${\rm \bf {b}}|_{2}=10$ and ${\rm \bf {a}}|_{2}=01$.
Since $({\rm \bf {b}},{\rm \bf {a}})$ is not weak-admissible, by Lemma \ref{le3}, we have $\tilde{S}^{+}_{f}(a,b)=\Omega(1s_{1},0t_{1})$, where both $s_{1}$ and $t_{1}$ are self-admissible.  If, in addition, $(1s_{1},0t_{1})$ is weak-admissible, i.e,
 	\begin{equation*}
s_{1} \preceq\sigma^{n}(1s_{1})\preceq t_{1} \ \ {\rm and} \ \ s_{1}\preceq\sigma^{n}(0t_{1})\preceq t_{1} \ \ \ \ {\rm for  \ all}\ \ n\geq 0,
	\end{equation*}
 then it is done. Otherwise, there exists an $n$ such that $\sigma^{n}(0t_{1})\prec s_{1}$ or $\sigma^{n}(1s_{1})\succ t_{1}$.
Since $\tilde{S}^{+}_{f}(a,b)\nsubseteqq\{0^\infty,1^\infty\}$, we consider the cases $\sigma^{n}(t_{1})\prec s_{1}$ or $\sigma^{n}(s_{1})\succ t_{1}$ $(n\geq1)$ for instead.
  \par Case 1, $\sigma^{n}(s_{1})\succ t_{1}$. Let $s_{2}=(u_{1}\cdots (u_{p}+1)))^\infty$ where $p=\inf\{n\geq 0: \sigma^{n}(s_{1})\succ t_{1}\}$. We claim that $\Omega(1s_{2},0t_{1})=\Omega(1s_{1},0t_{1})$. It is clear that $u_{p}=0$ and $s_2\succ s_1$, hence $\Omega(1s_{2},0t_{1})\subseteq\Omega(1s_{1},0t_{1})$. For any $w=(w_{1}w_{2}\cdots)\notin \Omega(1s_{2},0t_{1})$, there exists an $n$ such that $\sigma^{n}(w)\prec s_2$ or $\sigma^{n}(w)\succ t_1$. It is clear that $w\notin\Omega(1s_{1},0t_{1})$ if $\sigma^{n}(w)\succ t_1$. For the case $\sigma^{n}(w)\prec s_2$, then there exists an $l\geq0$ such that $w_{n+1}\cdots w_{n+lp}=(u_{1}\cdots u_{p-1}1)^{l}$ and $w_{n+lp+1}\cdots w_{n+(l+1)p}\prec u_{1}\cdots u_{p-1}1 $, which indicates that $w_{n+lp+1}\cdots w_{n+(l+1)p}\preceq u_{1}\cdots u_{p-1}0 $. Combining this with the assumption that $\sigma^{p}(s_{1})\succ t_{1}$, we have $\sigma^{n+lp}(w)\prec s_1$ and hence $w\notin\Omega(1s_{1},0t_{1})$. Repeat the process on $s_2$ if there exists $p_1<p$ such that $\sigma^{p_1}(s_2)\succ t_1$, the process will stop in finite times. Suppose it repeats for $m$ times, and $s_1\prec s_2 \prec\cdots \prec s_m$, then we can finally obtain $s=s_m=\min\tilde{S}^{+}_{f}(a,b)$.

  \par Case 2, $\sigma^{n}(t_{1})\prec s$. Let $t_{2}=(v_{1}\cdots (v_{q}-1)))^\infty$ where $q=\inf\{n\geq 0: \sigma^{n}(t_{1})\prec s\}$. Similarly, $\Omega(1s,0t_{2})=\Omega(1s,0t_{1})$, and the process will repeat for finite times. Suppose it repeats for $r$ times, and $t_1\succ t_2 \succ\cdots \succ t_r$, then $t=t_r=\max\tilde{S}^{+}_{f}(a,b)$.
 \par By the construction of $s$ and $t$, $(1s,0t)$ is weak-admissible and $\Omega(1s,0t)=\tilde{S}^+_{f}(a,b)$.
\end{proof}
\end{lemma}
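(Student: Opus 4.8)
The plan is to sidestep any inductive correction procedure and simply take $s$ and $t$ to be the lexicographically smallest and largest elements of the survivor set $\tilde{S}^{+}_{f}(a,b)$ itself. Write $\Sigma:=\tilde{S}^{+}_{f}(a,b)$, so that by Lemma~\ref{lemma2} we have $\Sigma=\{w\in\{0,1\}^{\mathbb{N}}:\sigma({\rm \bf {b}})\preceq\sigma^{n}(w)\preceq\sigma({\rm \bf {a}})\ \forall n\ge0\}$. First I would record that $\Sigma$ is closed (each constraint $\sigma^{n}(w)\preceq v$ or $\sigma^{n}(w)\succeq v$ cuts out a closed set, since $\{u:u\preceq v\}$ and $\{u:u\succeq v\}$ are closed and $\sigma$ is continuous, so $\Sigma$ is an intersection of closed sets) and $\sigma$-invariant (applying $\sigma$ to a point of $\Sigma$ only discards its $n=0$ constraint). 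The hypothesis $\tilde{S}^{+}_{f}(a,b)\nsubseteqq\{0^{\infty},1^{\infty}\}$ then guarantees, via $\sigma$-invariance, that $\Sigma$ contains a sequence starting with $0$ and a sequence starting with $1$: pick $w\in\Sigma\setminus\{0^{\infty},1^{\infty}\}$; it contains both symbols, so some shift of $w$ starts with $0$ and some shift of $w$ starts with $1$, and both such shifts lie in $\Sigma$. Since $\Sigma$ is closed it follows that the extrema $s:=\min\Sigma$ and $t:=\max\Sigma$ are attained and lie in $\Sigma$, and the previous observation forces $s$ to start with $0$ and $t$ to start with $1$.

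With this choice I would then verify the two assertions. The identity $\Omega(1s,0t)=\Sigma$ is immediate: from $s,t\in\Sigma$ we get $\sigma({\rm \bf {b}})\preceq s\preceq t\preceq\sigma({\rm \bf {a}})$, so every $w$ whose shifts all lie between $s$ and $t$ also has all its shifts between $\sigma({\rm \bf {b}})$ and $\sigma({\rm \bf {a}})$, giving $\Omega(1s,0t)\subseteq\Sigma$; conversely, if $w\in\Sigma$ then $\sigma^{n}(w)\in\Sigma$ for every $n$ by $\sigma$-invariance, hence $s\preceq\sigma^{n}(w)\preceq t$, i.e.\ $w\in\Omega(1s,0t)$. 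For weak-admissibility of the pair $(1s,0t)$ (i.e.\ condition~(\ref{non-expanding}) with $k_{+}=1s$ and $k_{-}=0t$) I must show $s\preceq\sigma^{n}(1s)\preceq t$ and $s\preceq\sigma^{n}(0t)\preceq t$ for all $n\ge0$. For $n\ge1$ these read $s\preceq\sigma^{n-1}(s)\preceq t$ and $s\preceq\sigma^{n-1}(t)\preceq t$, which hold because $\sigma^{n-1}(s),\sigma^{n-1}(t)\in\Sigma$. For $n=0$, the inequalities $s\preceq 1s$ and $0t\preceq t$ are clear since $s$ starts with $0$ and $t$ with $1$, while $1s\preceq t$ and $s\preceq 0t$ reduce, after cancelling the common leading symbol, to $s\preceq\sigma(t)$ and $\sigma(s)\preceq t$, both again consequences of $\sigma(t),\sigma(s)\in\Sigma$.

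I expect the only genuine subtlety to be these $n=0$ boundary inequalities, and in particular locating where the hypothesis $\tilde{S}^{+}_{f}(a,b)\nsubseteqq\{0^{\infty},1^{\infty}\}$ actually enters: it is needed precisely to know that $s$ and $t$ carry the right leading symbols (otherwise $\Sigma$ could be $\{0^{\infty}\}$ or $\{1^{\infty}\}$ and the construction would break down). The assumption that $({\rm \bf {b}},{\rm \bf {a}})$ fails weak-admissibility plays no role in this argument; it only separates off the trivial case handled just before the statement. If one instead wants $s$ and $t$ to come out \emph{periodic} rather than as abstract extrema, one should run the constructive route: use Lemma~\ref{le3} to replace any non-self-admissible endpoint by its periodic truncation, and then, as long as the pair is not yet weak-admissible, repeat the single correction of raising $s$ by incrementing the last entry of the offending length-$p$ initial block (where $p$ is the first index with $\sigma^{p}(s)\succ t$), symmetrically for $t$, verifying at each stage that the generated subshift is unchanged. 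There the main obstacle is termination, which holds because each correction strictly increases $s$ (resp.\ strictly decreases $t$) while keeping the period bounded by the current bad index, leaving only finitely many periodic candidates; the process therefore stabilizes, necessarily at $\min\Sigma$ and $\max\Sigma$, recovering the same pair as the first approach.
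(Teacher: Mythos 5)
Your argument is correct, and for the substantive part of the lemma it takes a genuinely different route from the paper. The paper also begins by setting $s=\min\tilde{S}^{+}_{f}(a,b)$, $t=\max\tilde{S}^{+}_{f}(a,b)$ and proving $\Omega(1s,0t)=\tilde{S}^{+}_{f}(a,b)$ exactly as you do, but it then devotes the bulk of the proof to an iterative combinatorial construction (starting from the periodic truncations of Lemma~\ref{le3} and repeatedly incrementing the offending block of $s$, resp.\ decrementing that of $t$) in order to actually produce these extrema, and it asserts weak-admissibility only implicitly ``by the construction''. You instead obtain existence of the extrema abstractly ($\Sigma$ is closed and $\sigma$-invariant, hence compact, so $\min$ and $\max$ are attained in $\Sigma$) and verify weak-admissibility directly: the $n\ge 1$ inequalities follow from $\sigma$-invariance plus extremality, and the $n=0$ inequalities reduce, after stripping the leading symbol, to $\sigma(s),\sigma(t)\in\Sigma$, where the hypothesis $\tilde{S}^{+}_{f}(a,b)\nsubseteq\{0^{\infty},1^{\infty}\}$ is used only to pin down the leading symbols of $s$ and $t$. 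This is shorter, closes the gap the paper leaves in checking weak-admissibility, and correctly identifies that the non-weak-admissibility hypothesis is dispensable. What your route does not deliver, and what the paper's constructive detour is really for, is the explicit (eventually) periodic form of $s$ and $t$ and the precise correction steps producing them; these are what Propositions~\ref{identicalinterval} and~\ref{identicalintervaldier} lean on to locate the endpoints of the plateaux $I(b)$. Your closing sketch of the constructive alternative acknowledges this, so as a replacement proof of the lemma as stated your argument is complete, while the paper's version should be read as also setting up machinery for Section~\ref{platformsec}.
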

We give Example \ref{consturcition of s,t} to understand the process of constructing weak-admissible $(1s,0t)$. Indeed, Lemma \ref{le3} and Lemma \ref{xin3} are the key to characterize the plateau of survivor set $\tilde{S}^+_{f}(a,b)$ in Section 4. Now we are in the position to prove Theorem 1.2, which will be used to calculate the topological entropy $h_{top} (S^+_{f}(H))$ via Lorenz shifts and kneading determinants.

\vspace{0.3cm}
\noindent {\bf Proof of Theorem \ref{theorem1}}
\par We start by proving the first part of the theorem. Let $f\in ELM$ with a hole $H=(a,b)$, if the pair $({\rm \bf {b}},{\rm \bf {a}})$ is weak-admissible, then it is clear that, there exists a Lorenz map $g$ such that $\Omega(g)=\tilde{S}^{+}_{f}(H)$. Moreover, the kneading invariants of $g$ is exactly $({\rm \bf {b}},{\rm \bf {a}})$. For the case the pair $({\rm \bf {b}},{\rm \bf {a}})$ is not weak-admissible, by Lemma \ref{xin3} above, we can still find a weak-admissible pair $(1s,0t)$. As a result, there exists $g\in LM$ with its kneading invariants being $(1s,0t)$, such that $\Omega(g)=\Omega(1s,0t)=\tilde{S}^{+}_{f}(H)$.
\par Conversely, for the second part, let $g\in LM$ and $(1s,0t)$ be its kneading invariants, then $\Omega(g)=\{w\in\{0,1\}^{\mathbb{N}}: s \preceq\sigma^{n}(w)\preceq t\ \forall n\geq0\}$. We claim that, for any $f\in ELM$ with $\Omega(f)\supsetneqq \Omega(g)$, there exists a hole $H=(a,b)$ which contains the critical point, such that $\Omega(g)=\tilde{S}^{+}_{f}(H)$. Denote $(k_+,k_-)$ as the kneading invariants of $f\in ELM$, since $\Omega(f)\supsetneqq \Omega(g)$, we have
$$  \sigma(k_+)\preceq s\prec0t\preceq k_-\prec k_+\preceq 1s\prec t\preceq \sigma(k_-) .
$$
As a result, we only need to choose $a$, $b$ such that $\tau_{f}(a-)=0t-$ and $\tau_{f}(b+)=1s$, and hence $\tilde{S}^{+}_{f}(a,b)=\Omega(g)$.      $\hfill\square$

\section{Plateau of $S^{+}_{f}(H)$ and devil staircase}\label{platformsec}
Let $f\in ELM$ with a hole $H=(a,b)$ and $a\leq c \leq b$. With the help of Lemma \ref{le3} and Lemma \ref{xin3}, we are now able to characterize that, when $a\leq c$ is fixed and $\rm \bf {b}$ is periodic, there exists a maximal interval $I(b)\subset(c,1)$ such that $S^{+}_{f}(a,\epsilon)$ is identical for all $\epsilon \in I(b)$, and $S^{+}_{f}(a,\eta)\neq S^{+}_{f}(a,b)$ for any $\eta \notin I(b)$. Moreover, $b$ is contained in $I(b)$ and we characterize the endpoints of $I(b)$ in different cases. The proof is quite different between the case $a=c$ and the case $a\neq c$, hence we prove the two cases separately.
\begin{proposition}\label{identicalinterval}
Let $f\in ELM$ with a hole $H=(c,b)$. If $\rm \bf {b}$ is periodic, then there exists a maximal interval $I(b)$ such that for all $\epsilon\in I(b)$, $S^{+}_{f}(c,\epsilon)=S^{+}_{f}(c,b)$. The endpoints of $I(b)$ are also characterized.
\begin{proof} Let $f\in ELM$, $(k_{+},k_{-})$ be its kneading invariants,
$k(0)=\sigma(k_{+})$ and $k(1)=\sigma(k_{-})$. Denote the period of $\rm \bf {b}$ as $p$ and $\sigma({\rm \bf {b}})=(v_{1}v_{2}\cdots v_{p})^{\infty}$. Notice that the case $H=(a,c)$ can be obtained similarly, hence we only give the proof of case $a=c$, which can be divided into the following cases.
\par Case 1, $({\rm {\bf {b}}},k_-)$ is weak-admissible. Given any $\epsilon\in[c,1]$ with its upper kneading sequence $\eta= v_{0}v_{1}v_{2}\cdots v_{p} \mu$, where $\mu$ is an infinite sequence satisfying $\mu\preceq(v_{1}v_{2}\cdots v_{p})^{\infty}$, it is clear that $\epsilon < b$ and we consider the following two sets

\begin{equation*}
\left \{ \begin{array}{ll}
\tilde{S}^{+}_{f}(c,b)=\{w\in\{0,1\}^{\mathbb{N}}: \sigma({\rm \bf {b}})\preceq\sigma^{n}(w)\preceq k(1) \ \forall n\geq0\},\\
\tilde{S}^{+}_{f}(c,\epsilon)=\{w\in\{0,1\}^{\mathbb{N}}: \sigma(\eta)\preceq\sigma^{n}(w)\preceq k(1) \ \forall n\geq0\}.
\end{array}
\right.
\end{equation*}
Applying the proof of Lemma \ref{le3}, we know that $\tilde{S}^{+}_{f}(c,b)=\tilde{S}^{+}_{f}(c,\epsilon)$. Hence the range of the interval $I(b)$ depends on the range of infinite sequence $\mu$. Indeed, in order to satisfy the weak-admissible condition, the lexicographically smallest $\mu$ can only be $k(0)$ and the lexicographically largest $\mu$ can only be $(v_{1}v_{2}\cdots v_{p})^{\infty}$. Denote the left and right endpoints of $I(b)$ are $b_{l}$ and $b_{r}$, we have $\tau_{f}(b_{l}+)=1v_{1}v_{2}\cdots v_{p} k(0)$
and $\tau_{f}(b_{r}+)={\rm \bf {b}}$, and $S^{+}_{f}(c,\epsilon)=S^{+}_{f}(c,b)$ for all $\epsilon\in[b_{l},b_{r}]$.

\par Case 2, $({\rm {\bf {b}}},k_-)$  is not weak-admissible, and there exists weak-admissible $(1s,k_-)$ such that $\Omega({\rm {\bf {b}}},k_-)=\Omega(1s,k_-)=\tilde{S}^+_{f}(c,b)$. Observe that $s$ is periodic for the reason that $\rm \bf {b}$ is periodic, and denote $s=(s_1\cdots s_r)^\infty$. Since $\sigma^{n}({\rm {\bf {b}}})$ is always lexicographically smaller than $\sigma(k_-)$ for any $n\geq 0$, $s$ can only be obtained for the reason that ${\rm {\bf {b}}}$ is not self-admissible. Similar to subcase 1, we can obtain that $I(b)=[b_{l},b_{r}]$, where $\tau_{f}(b_{l}+)=1s_1\cdots s_r k(0)$
and $\tau_{f}(b_{r}+)=1s$, and $S^{+}_{f}(c,\epsilon)$ is identical for all $\epsilon\in[b_{l},b_{r}]$.

\par Case 3, $({\rm {\bf {b}}},k_-)$  is not weak-admissible, and there exists weak-admissible $(1s,0t)$ such that $\Omega(1s,0t)=\Omega({\rm {\bf {b}}},k_-)=\tilde{S}^+_{f}(c,b)$. Notice that here $k_-$ is changed into sequence $0t$, which is different from Case 2. Denote $k(1)=\sigma(k_-)=(u_1 u_2 \cdots )$ and $s=(s_1\cdots s_r)^\infty$. If ${\rm {\bf {b}}}$ is unchanged, then let $s={\rm {\bf {b}}}$, and $k_-$ must be changed in Case 3. Suppose that $q$ is the smallest integer such that $\sigma^{q}(k(1))\prec s$ and $t=(u_1 \cdots u_{q-1} (u_{q}-1))^\infty$. Next we consider the sequence $\gamma=s_{1}s_{2}\cdots s_{r-1}(s_r -1)t$ in the following subcases.
\par Subcase 1, $\sigma^{q}(\sigma(k_-))   \preceq\gamma=s_{1}s_{2}\cdots s_{r-1}(s_r -1)t$. By the proof of Lemma \ref{xin3}, we have that $I(b)=(b_{l},b_{r}]$, where $\tau_{f}(b_{l}+)=1s_{1}s_{2}\cdots s_{r-1}0t$
and $\tau_{f}(b_{r}+)=1s$. Moreover, it can be easily seen that $1s_{1}s_{2}\cdots s_{r-1}0t\prec 1s_1\cdots s_{r-1}s_r k(0)$, hence the plateau in Subcase 1 contains the plateau in Case 2, and the left endpoint $b_{l}$ can not be reached.

\par Subcase 2, $\sigma^{q}(\sigma(k_-))   \succ\gamma=s_{1}s_{2}\cdots s_{r-1}(s_r -1)t$, and $u_{q+r}=1$. By the assumption in Case 3, we have that, $k_-$ is changed into $0t$ because $\sigma^{q}(\sigma(k_-))\prec s$. Hence
$$  s_{1}s_{2}\cdots s_{r-1}0t\prec\sigma^{q}(\sigma(k_-))\prec s=(s_1 s_2 \cdots s_{r-1} 1)^\infty.
$$
By Lemma \ref{xin3}, $k_-$ can not be changed into $0t$ no matter what $\gamma$ be. So the plateau here is the same as Case 2 above.

\par Subcase 3, $\sigma^{q}(k(1))   \succ\gamma=s_{1}s_{2}\cdots s_{r-1}(s_r -1)t$, $u_{q+r}=0$ and $\sigma^{q}(k(1))$ is self-admissible. In order to change $k_-$ into $0t$, we need that $t\succ \sigma^{q+r}(k(1))$. Hence $I(b)=(b_{l},b_{r}]$, where $\tau_{f}(b_{l}+)=1s_{1}s_{2}\cdots s_{r-1}\sigma^{q+r}(k(1))$
and $\tau_{f}(b_{r}+)=1s$.

\par Subcase 4, $\sigma^{q}(k(1))   \succ\gamma=s_{1}s_{2}\cdots s_{r-1}(s_r -1)t$, $u_{q+r}=0$ and $\sigma^{q}(k(1))$ is not self-admissible, which indicates that there exists $n>q$ such that $\sigma^{n}(k(1))\prec \sigma^{q}(k(1))$. At this case, we have
 $$ s_{1}s_{2}\cdots s_{r-1}(s_r -1)=u_{q+1}u_{q+2}\cdots u_{q+r-1}u_{q+r}.
 $$
 Since $\sigma^{q}(k(1))$ is not self-admissible, the sequence $s_{1}s_{2}\cdots s_{r-1}\sigma^{q+r}(k(1))$ in Subcase 3 is either not self-admissible. Applying the method in Case 2, we can obtain that $I(b)=[b_{l},b_{r}]$, where $\tau_{f}(b_{l}+)=1u_{q+1}\cdots u_{q+n}k(0)$
and $\tau_{f}(b_{r}+)=1s$.

\par See Example \ref{identical1} for an intuitive understanding of plateaux $I(b)$ in different cases.
\end{proof}
\end{proposition}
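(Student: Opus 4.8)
The plan is to work entirely at the symbolic level and reduce the statement to a question about how far the lower kneading data may be moved while the associated subshift stays fixed. By Case~2 of Lemma~\ref{lemma2}, writing $\mathbf{b}_{\epsilon}:=\tau_{f}(\epsilon+)$, for every $\epsilon\in[c,1]$ one has
\[
\tilde{S}^{+}_{f}(c,\epsilon)=\{\,w\in\{0,1\}^{\mathbb{N}}:\sigma(\mathbf{b}_{\epsilon})\preceq\sigma^{n}(w)\preceq k(1)\ \text{for all}\ n\geq0\,\},
\]
so the upper bound $k(1)=\sigma(k_{-})$ is frozen (it depends only on $f$) and only the periodic lower bound $\sigma(\mathbf{b})$ is in play. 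Since $\epsilon\mapsto\mathbf{b}_{\epsilon}$ is non-decreasing in the lexicographic order — a larger $\epsilon$ is a larger hole — the map $\epsilon\mapsto\tilde{S}^{+}_{f}(c,\epsilon)$ is non-increasing under inclusion, so $I(b):=\{\epsilon\in[c,1]:S^{+}_{f}(c,\epsilon)=S^{+}_{f}(c,b)\}$ is automatically an interval containing $b$; the content of the proposition is to compute its endpoints $b_{l}\leq b\leq b_{r}$ and to decide which are attained.

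Next I would replace the (possibly non-admissible) pair $(\mathbf{b},k_{-})$ by a weak-admissible one using Lemmas~\ref{le3} and~\ref{xin3}: there is a weak-admissible pair $(1s,0t)$ with $\Omega(1s,0t)=\tilde{S}^{+}_{f}(c,b)$, where $s=\min\tilde{S}^{+}_{f}(c,b)$ is periodic because $\mathbf{b}$ is, say $s=(s_{1}\cdots s_{r})^{\infty}$, and $t=\max\tilde{S}^{+}_{f}(c,b)$. The right endpoint is then pinned by $s$: since $s$ is the minimum of the subshift, any strictly larger lower bound expels it, whereas for $\epsilon$ with $\tau_{f}(\epsilon+)\preceq 1s$ the replacement argument of Lemma~\ref{le3} keeps the subshift unchanged; hence $\tau_{f}(b_{r}+)=1s$, which equals $\mathbf{b}$ (so that $b_{r}=b$) precisely when $\sigma(\mathbf{b})$ is self-admissible, and $b_{r}>b$ otherwise. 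For the left endpoint one lowers the bound as far as possible: follow one period $s_{1}\cdots s_{r}$ and then append the smallest admissible tail, which is $k(0)=\sigma(k_{+})$ when $k_{-}$ was left unchanged (so $t=\sigma(k_{-})$) and is an appropriate shift $\sigma^{j}(k(1))$ of the frozen upper bound when $k_{-}$ had to be shortened to $0t$; this yields $\tau_{f}(b_{l}+)=1s_{1}\cdots s_{r}k(0)$ or $\tau_{f}(b_{l}+)=1s_{1}\cdots s_{r-1}\sigma^{j}(k(1))$ (up to the bookkeeping of the junction), and $b_{l}\in I(b)$ exactly when the resulting sequence is self-admissible. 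In each configuration the maximality of $I(b)$ is verified by exhibiting the explicit sequence that enters or leaves the subshift as $\epsilon$ crosses $b_{l}$ or $b_{r}$.

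It then remains to organize the bookkeeping into the exhaustive cases: (i)~$(\mathbf{b},k_{-})$ weak-admissible, where $I(b)=[b_{l},b_{r}]$ with $b_{r}=b$; (ii)~not weak-admissible but repaired by changing $\mathbf{b}$ alone, i.e.\ $t=\sigma(k_{-})$; and (iii)~not weak-admissible, with $k_{-}$ forced down to $(u_{1}\cdots u_{q-1}(u_{q}-1))^{\infty}$, where $q$ is minimal with $\sigma^{q}(k(1))\prec s$. Case~(iii) splits further according to the position of $\sigma^{q}(k(1))$ relative to the junction word $\gamma=s_{1}\cdots s_{r-1}(s_{r}-1)t$ and to whether $\sigma^{q}(k(1))$ is self-admissible; in some branches the replacement procedure terminates at a genuinely periodic left endpoint, giving a closed $I(b)$, and in others it does not, giving a half-open $I(b)$. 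I expect the main obstacle to be exactly this Case~(iii) analysis — running the Lemma~\ref{xin3} replacement through the interplay between a shortened $k_{-}$ and a non-self-admissible $\mathbf{b}$, and correctly tracking which of the (possibly non-periodic) candidate endpoints is realized so that the closed-versus-half-open nature of $I(b)$ comes out right in every branch. The individual estimates are routine lexicographic comparisons; arranging the cases to be disjoint and exhaustive is the delicate part.
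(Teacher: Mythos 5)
Your plan follows essentially the same route as the paper: reduce to the symbolic description of $\tilde{S}^{+}_{f}(c,\epsilon)$ from Lemma \ref{lemma2} with the upper bound $k(1)$ frozen, replace $(\mathbf{b},k_{-})$ by a weak-admissible pair $(1s,0t)$ via Lemmas \ref{le3} and \ref{xin3}, pin the right endpoint at $\tau_{f}(b_{r}+)=1s$, obtain the left endpoint by appending the smallest admissible tail ($k(0)$ when $k_{-}$ is untouched, a shift of $k(1)$ when it is not), and split into the same three cases. The observation that monotonicity of $\epsilon\mapsto\tilde{S}^{+}_{f}(c,\epsilon)$ makes $I(b)$ automatically an interval is a nice addition the paper leaves implicit.

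The one substantive shortfall is that the part you flag as ``the main obstacle'' --- the Case (iii) analysis where $k_{-}$ is forced down to $0t$ --- is precisely where the paper's proof does its real work, and your proposal describes the relevant comparison data (the position of $\sigma^{q}(k(1))$ relative to the junction word $\gamma=s_{1}\cdots s_{r-1}(s_{r}-1)t$, the value of $u_{q+r}$, and self-admissibility of $\sigma^{q}(k(1))$) without actually resolving the branches. The paper runs this through four subcases with four distinct left-endpoint formulas ($1s_{1}\cdots s_{r-1}0t$, the Case 2 endpoint $1s_{1}\cdots s_{r}k(0)$, $1s_{1}\cdots s_{r-1}\sigma^{q+r}(k(1))$, and $1u_{q+1}\cdots u_{q+n}k(0)$), and in particular shows that the half-open versus closed dichotomy is decided by whether the candidate left-endpoint sequence is itself realizable as an admissible upper kneading sequence; one subcase even collapses back to Case 2 entirely. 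Until you carry out those lexicographic comparisons and verify in each branch which candidate is attained, the ``characterization of the endpoints'' claimed in the statement is not established, so the proposal should be regarded as a correct skeleton of the paper's argument rather than a complete proof.
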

\begin{proposition}\label{identicalintervaldier}
Let $f\in ELM$ with a hole $H=(a,b)$, where $a<c<b$. If $\rm \bf {b}$ is periodic, then there exists a maximal interval $I(b)$ such that for all $\epsilon\in I(b)$, $S^{+}_{f}(a,\epsilon)$ is identical. The endpoints of $I(b)$ are also characterized.
\begin{proof} Here the plateau is quite different from Proposition \ref{identicalinterval}, for the reason that there exist lots of sequences between ${\rm {\bf {a}}}$ and $k_-$ when $a\neq c$. Let $f\in ELM$, $(k_{+},k_{-})$ be its kneading invariants,
$k(0)=\sigma(k_{+})$ and $k(1)=\sigma(k_{-})$. Denote the period of $\rm \bf {b}$ as $p$, $\sigma({\rm \bf {b}})=(v_{1}v_{2}\cdots v_{p})^{\infty}$, and $\sigma({\rm \bf {a}})=(u_{1}u_{2}\cdots)$. We divide the proof into following cases.
\par Case 1, $({\rm {\bf {b}}},{\rm {\bf {a}}})$ is weak-admissible, or $({\rm {\bf {b}}},{\rm {\bf {a}}})$ is not weak-admissible but only ${\rm {\bf {b}}}$ is modified. By Lemma \ref{xin3}, there exists weak-admissible $(1s,{\rm {\bf {a}}})$ such that $\Omega({\rm {\bf {b}}},{\rm {\bf {a}}})=\Omega(1s,{\rm {\bf {a}}})=\tilde{S}^+_{f}(a,b)$. Denote $1s={\rm {\bf {b}}}$ if $({\rm {\bf {b}}},{\rm {\bf {a}}})$ is weak-admissible; otherwise, we denote $s=(s_1\cdots s_r)^\infty$.
Similar to Case 3 in Proposition \ref{identicalinterval}, we can obtain that $I(b)=(b_{l},b_{r}]$, where $\tau_{f}(b_{l}+)=1s_{1}s_{2}\cdots s_{r-1}(s_r -1) \sigma({\rm {\bf {a}}})$
and $\tau_{f}(b_{r}+)=1s$.

\par Case 2, $({\rm {\bf {b}}},{\rm {\bf {a}}})$  is not weak-admissible and at least ${\rm {\bf {a}}}$ is modified.
 Then there exists weak-admissible $(1s,0t)$ such that $\Omega(1s,0t)=\Omega({\rm {\bf {b}}},{\rm {\bf {a}}}) =\tilde{S}^+_{f}(a,b)$. Denote $s=(s_1\cdots s_r)^\infty$, once $s$ is obtained, the sequence $t$ can be obtained in just one step. Let $j$ be the smallest integer such that $(u_1 u_2 \cdots u_j)^\infty$ is self-admissible, and $i$ be the smallest integer such that $\sigma^{i}(\sigma({\rm {\bf {a}}}))\prec s$. If such finite integers $j$ and $i$ does not exist, we denote them as $+\infty$. It is clear that $i\neq j$ since $\sigma^{j}(\sigma({\rm {\bf {a}}}))$ begins with $1$ and $\sigma^{i}(\sigma({\rm {\bf {a}}}))$ begins with $0$.
Now see the following subcases.
\par Subcase 1, $j<i$. Here we have that $t=(u_1 u_2 \cdots u_j)^\infty$. Similar to Case 3 in Proposition \ref{identicalinterval}, we have that $I(b)=(b_{l},b_{r}]$, where $\tau_{f}(b_{l}+)=1s_{1}s_{2}\cdots s_{r-1}(s_r -1) t$ and $\tau_{f}(b_{r}+)=1s$.
\par Subcase 2, $i<j$ and $\sigma^{i}(\sigma(k_-))\preceq\eta=s_{1}s_{2}\cdots s_{r-1}(s_r -1)t$, where $t=(u_1 \cdots u_{i-1} (u_{i}-1))^\infty$.
The plateau $I(b)=(b_{l},b_{r}]$ is the same form as Case 1, which means $\tau_{f}(b_{l}+)=1s_{1}s_{2}\cdots s_{r-1}(s_r -1) t=1\eta$ and $\tau_{f}(b_{r}+)=1s$.
\par Subcase 3, $i<j$, $\sigma^{i}(\sigma(k_-))   \succ\eta=s_{1}s_{2}\cdots s_{r-1}(s_r -1)t$ and $u_{i+r}=1$, where $t=(u_1 \cdots u_{i-1} (u_{i}-1))^\infty$. At this case, $s_{1}s_{2}\cdots s_{r-1}(s_r -1)\xi$ can not be changed into $s$ for all $t\preceq\xi\preceq \sigma({\rm {\bf {a}}})$. Hence the plateau $I(b)=(b_{l},b_{r}]$ is the same form as Subcase 1, which means $\tau_{f}(b_{l}+)=1s_{1}s_{2}\cdots s_{r-1}(s_r -1)\sigma({\rm {\bf {a}}})$ and $\tau_{f}(b_{r}+)=1s$.

\par Subcase 4, $i<j$, $\sigma^{i}(\sigma(k_-))\succ\eta=s_{1}s_{2}\cdots s_{r-1}(s_r -1)t$, $u_{i+r}=0$ and $\sigma^{i}(\sigma({\rm {\bf {a}}}))$ is self-admissible, where $t=(u_1 \cdots u_{i-1} (u_{i}-1))^\infty$. Similar to Subcase 3 in Proposition \ref{identicalinterval}, the plateau $I(b)=(b_l,b_r]$, where  $\tau_{f}(b_{l}+)=1s_{1}s_{2}\cdots s_{r-1}(s_r -1)\sigma^{i+r}(\sigma({\rm {\bf {a}}}))$ and $\tau_{f}(b_{r}+)=1s$.

\par Subcase 5, $i<j$, $\sigma^{i}(\sigma(k_-))\succ\eta=s_{1}s_{2}\cdots s_{r-1}(s_r -1)t$, $u_{i+r}=0$ and $\sigma^{i}(\sigma({\rm {\bf {a}}}))$ is not self-admissible, which indicates that there exists $n>i$ such that $\sigma^{n}(\sigma({\rm {\bf {a}}}))\prec \sigma^{i}(\sigma({\rm {\bf {a}}}))$. At this case, we have
 $$ s_{1}s_{2}\cdots s_{r-1}(s_r -1)\cdots s_n=u_{i+1}u_{i+2}\cdots u_{i+r-1}u_{i+r}\cdots u_{i+n}.
 $$
 Since $\sigma^{i}(\sigma({\rm {\bf {a}}}))$ is not self-admissible, the sequence $s_{1}s_{2}\cdots s_{r-1}\sigma^{q+r}(\sigma({\rm {\bf {a}}}))$ in Subcase 4 is either not self-admissible. Applying the method in Case 2 and Subcase 3, we can obtain that $I(b)=(b_{l},b_{r}]$, where $\tau_{f}(b_{l}+)=1u_{i+1}\cdots u_{i+n-1}(u_{i+n}-1)(\sigma({\rm {\bf {a}}}))$
and $\tau_{f}(b_{r}+)=1s$.
\end{proof}
\end{proposition}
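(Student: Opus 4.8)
The plan is to move the entire problem into symbolic dynamics and then rerun the reduction procedure of Lemmas~\ref{le3} and~\ref{xin3}, this time bookkeeping \emph{both} endpoints of the hole simultaneously. Fix $f\in ELM$ with kneading invariants $(k_{+},k_{-})$; set $k(0)=\sigma(k_{+})$, $k(1)=\sigma(k_{-})$, and write ${\rm \bf {a}}=\tau_{f}(a-)$, ${\rm \bf {b}}=\tau_{f}(b+)$, $\sigma({\rm \bf {b}})=(v_{1}\cdots v_{p})^{\infty}$, $\sigma({\rm \bf {a}})=(u_{1}u_{2}\cdots)$. By Lemma~\ref{lemma2},
\[
\tilde{S}^{+}_{f}(a,\epsilon)=\{w\in\{0,1\}^{\mathbb{N}}:\sigma(\tau_{f}(\epsilon+))\preceq\sigma^{n}(w)\preceq\sigma({\rm \bf {a}})\ \ \forall n\geq0\},
\]
so, with $a$ fixed, the survivor set depends only on the single lower bound $\sigma(\tau_{f}(\epsilon+))$, which is non-decreasing in $\epsilon$ because $f$ is increasing on $(c,1]$ and the itinerary is monotone. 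By Lemma~\ref{xin3} (together with Lemma~\ref{le3}) this set equals $\Omega(1s,0t)$ for a weak-admissible pair $(1s,0t)$, and since ${\rm \bf {b}}$ is periodic the reduced lower sequence $s=(s_{1}\cdots s_{r})^{\infty}$ is again periodic. Hence $I(b)$ is precisely the set of $\epsilon$ whose reduced pair, formed against the \emph{fixed} upper datum $\sigma({\rm \bf {a}})$, equals $(1s,0t)$; this is automatically an interval and automatically maximal, and its endpoints are obtained as the extreme upper kneading sequences that still reduce to $(1s,0t)$.

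The genuinely new difficulty relative to Proposition~\ref{identicalinterval} (the case $a=c$) is that the reduced upper sequence $0t$ is no longer forced to be $k(1)$: the second stage of the reduction in Lemma~\ref{xin3} couples the two endpoints, so whether $\sigma({\rm \bf {a}})$ must be lowered to some $0t$, and to which $0t$, depends on how $s=s(\epsilon)$ compares with the shifts $\sigma^{n}(k(1))$ and $\sigma^{n}(\sigma({\rm \bf {a}}))$. I would therefore organise the proof by what happens at the upper end. In Case~1, where $({\rm \bf {b}},{\rm \bf {a}})$ is weak-admissible or only ${\rm \bf {b}}$ is modified, one has $0t=\sigma({\rm \bf {a}})$ throughout, and mimicking Case~3 of Proposition~\ref{identicalinterval} yields $I(b)=(b_{l},b_{r}]$ with $\tau_{f}(b_{r}+)=1s$ and $\tau_{f}(b_{l}+)=1s_{1}\cdots s_{r-1}(s_{r}-1)\sigma({\rm \bf {a}})$, the left endpoint not attained. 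In Case~2, where ${\rm \bf {a}}$ must also be modified, I would introduce the two critical indices $j=\min\{n:(u_{1}\cdots u_{n})^{\infty}\ \text{is self-admissible}\}$ and $i=\min\{n:\sigma^{n}(\sigma({\rm \bf {a}}))\prec s\}$ (each set to $+\infty$ when no such $n$ exists), note $i\neq j$ since the corresponding shifts begin with different symbols, and then split according to: $j<i$ (so $t=(u_{1}\cdots u_{j})^{\infty}$); $i<j$ with $\sigma^{i}(\sigma(k_{-}))\preceq\eta$, where $\eta:=s_{1}\cdots s_{r-1}(s_{r}-1)t$ and $t=(u_{1}\cdots u_{i-1}(u_{i}-1))^{\infty}$; $i<j$ with $\sigma^{i}(\sigma(k_{-}))\succ\eta$ and $u_{i+r}=1$; $i<j$ with $\sigma^{i}(\sigma(k_{-}))\succ\eta$, $u_{i+r}=0$ and $\sigma^{i}(\sigma({\rm \bf {a}}))$ self-admissible; and the same with $\sigma^{i}(\sigma({\rm \bf {a}}))$ not self-admissible, the last forcing one further pass of the Lemma~\ref{le3}/Lemma~\ref{xin3} reduction. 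In each subcase one first pins down the correct $t$, then records $\tau_{f}(b_{r}+)=1s$, the explicit left-endpoint sequence $\tau_{f}(b_{l}+)$, and whether $b_{l}$ is attained, exactly paralleling the subcase bookkeeping already done for Proposition~\ref{identicalinterval}.

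The heart of the argument, and the step I expect to be the main obstacle, is the verification inside each subcase that the declared interval is \emph{exactly} $I(b)$: that every $\epsilon$ with $\tau_{f}(b_{l}+)\prec\tau_{f}(\epsilon+)\preceq1s$ reduces to the same pair $(1s,0t)$, while every $\epsilon$ just outside produces a strictly larger or strictly smaller $\tilde{S}^{+}_{f}(a,\epsilon)$. This comes down to rerunning, now with the upper bound genuinely in play, the propagation estimates behind Lemmas~\ref{le3} and~\ref{xin3} --- of the form ``if $\sigma^{n}(w)$ strictly exceeds the current lower bound, then $\sigma^{n+1}(w)$ strictly exceeds the shifted lower bound'' --- and to checking that the iterated reduction terminates after finitely many rounds, so that the periodic endpoint sequences written down are the true extrema. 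These are elementary but numerous lexicographic manipulations; by contrast, the case decomposition itself is purely mechanical once the indices $i,j$ and the test sequence $\eta$ are identified.
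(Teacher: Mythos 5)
Your proposal follows essentially the same route as the paper's proof: the same symbolic reduction via Lemmas \ref{lemma2}, \ref{le3} and \ref{xin3}, the same split into the case where only ${\rm \bf {b}}$ is modified versus the case where ${\rm \bf {a}}$ must also be modified, the same critical indices $i,j$ and test word $\eta=s_{1}\cdots s_{r-1}(s_{r}-1)t$, and the same five subcases with matching endpoint formulas $\tau_{f}(b_{r}+)=1s$ and the corresponding $\tau_{f}(b_{l}+)$. The lexicographic verifications you defer to a final pass are likewise left implicit in the paper (by reference to the analogous cases of Proposition \ref{identicalinterval}), so the two arguments coincide in both structure and level of detail.
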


\begin{remark}
\
\begin{enumerate}
\item When $a\neq c$, $I(b)$ is left open; when $a=c$, $I(b)$ maybe left closed.
\item Indeed, if both $a$ and $b$ can vary, there is a set $A\times B\subseteq I(a)\times I(b)$ such that for any $(\varepsilon,\eta)\in A\times B$, $\tilde{S}^{+}_{f}(\varepsilon,\eta)$ is identical. See Example \ref{identicaltwosets}.

\end{enumerate}
\end{remark}

It is known from above that there are lots of different plateaux on $[c,1]$, and the points that $S^{+}_{f}(a,b)$ changes are called bifurcation points. Let $f\in ELM$ with a hole $H=(a,b)$, now we are able to characterize the bifurcation set with $a$ being fixed, which defined as
$$ E_{f}(a):=\{b\in[c,1]:\tilde{S}^{+}_{f}(a,\epsilon)\neq \tilde{S}^{+}_{f}(a,b) \ {\rm for \ any} \ \epsilon>b\}.
$$
\begin{proposition}\label{pro2}
$E_{f}(a)=\{b\in[c,1]: {\rm \bf {b}}\in \tilde{S}^{+}_{f}(a,b)\}=\{b\in[c,1]:\sigma({\rm \bf {b}}) \preceq  \sigma^{n}({\rm \bf {b}}) \preceq \sigma({\rm \bf {a}}) \ \forall \ n\geq0\}$.
\begin{proof} Let $f\in ELM$ with $(k_+,k_-)$ being its kneading invariants, $k(1)=\sigma(k_-)$ and $k(0)=\sigma(k_+)$. Denote $A=\{b\in[c,1]:\sigma({\rm \bf {b}}) \preceq  \sigma^{n}({\rm \bf {b}}) \preceq \sigma({\rm \bf {a}}) \ \forall \ n\geq0\}$, $B=\{b\in[c,1]: {\rm \bf {b}}\in \tilde{S}^{+}_{f}(a,b)\}$ and $C=\{b\in[c,1]:\tilde{S}^{+}_{f}(a,\epsilon)\neq \tilde{S}^{+}_{f}(a,b) \ {\rm for \ any} \ \epsilon>b\}$. We claim that the bifurcation sets $A=B=C$ and prove this in two cases.
\par Case one, $a=c$. By Theorem \ref{xinthm1}, for any $b\in B$, we have ${\rm \bf {b}}\in \tilde{S}^{+}_{f}(c,b)=\tilde{S}^{+}_{f}(0,f(b))$, that is, $\sigma({\rm \bf {b}}) \preceq  \sigma^{n}({\rm \bf {b}}) \preceq k(1)$  for all $n\geq 0$ and hence $b\in A$. Conversely, given $b\notin B$, then there exists $N$ such that $k(0) \preceq  \sigma^{N}({\rm \bf {b}}) \prec \sigma({\rm \bf {b}})$, which indicates $b\notin A$. Hence we have $A=B$. Next we prove $B=C$. For any $b\in B$, we have ${\rm \bf {b}}\in \tilde{S}^{+}_{f}(0,f(b))$. Let $\epsilon>b$, by the monotonicity of $S^{+}_{f}(H)$ and $f$, $b\in (c,\epsilon)$, and hence $b\notin \tilde{S}^{+}_{f}(c,\epsilon)$. As a result, $\tilde{S}^{+}_{f}(c,b)\neq \tilde{S}^{+}_{f}(c,\epsilon)$ for all $\epsilon>b$, $b\in C$ and $B\subseteq C$. On the other side, given $b\notin B$, then there exits smallest integer $N$ such that $k(0) \preceq  \sigma^{N}({\rm \bf {b}}) \prec \sigma({\rm \bf {b}})$, that is, the first time enter the interval $[0,f(b))$. Actually, this relates to the definition of self-admissibility in Definition \ref{selfad}. Combining Lemma \ref{le3} with Proposition \ref{identicalinterval} above, we know that, there exists a plateau $I(b)=[b_{l},b_{r}]\ni b$, such that for any $\epsilon\in I(b)$, $\tilde{S}^{+}_{f}(c,b)=\tilde{S}^{+}_{f}(c,\epsilon)$. As a result, $b\notin B$ will lead to $b\notin C$. Moreover, the right endpoint $b_{r}$ of $I(b)$ belongs to $E_{f}(a)$ since its upper kneading sequence $(\sigma({\rm \bf {b}})|_{N})^\infty$ is admissible.
%Let $\delta=\min\{c-f^{n}(b):n\in\{0,\cdots, N-1\} \ \textup{and}\ f^{n}(b)<c\}$ and set $\epsilon=\min\{t-f^{N}(t),\delta\}$.
%By the right-continuity of $f^{N}$, there is a $\delta>0$ such that
%$$ f^{N}(\epsilon)\in\big(f^{N}(b), \frac{f^{N}(b)+f(b)}{2}\big)\subseteq (0,f(b))
%\ \ \ \ \forall \ \epsilon\in[b,b+\delta],
%$$which indicates $f^{N}(\epsilon)\in[0,f(b))\subset[0,f(\epsilon))$, hence $\epsilon\notin S^{+}_{f}(0,f(\epsilon))$ and $\epsilon\notin S^{+}_{f}(0,f(b))$. As a result, $S^{+}_{f}(0,f(b))\cap[f(b),f(\epsilon)]=\emptyset$ and thus$S^{+}_{f}(0,f(\epsilon))\subseteq S^{+}_{f}(0,f(b)) \subseteq S^{+}_{f}(0,f(\epsilon))$. Hence the function $\epsilon \mapsto  S^{+}_{f}(c,\epsilon)$ is constant on$[b,b+\delta]$, so $b\notin C$.
\par Case two, $a\neq c$.  For any $b\in B$, ${\rm \bf {b}}\in \tilde{S}^{+}_{f}(a,b)$ indicates that $\sigma({\rm \bf {b}}) \preceq  \sigma^{n}({\rm \bf {b}}) \preceq \sigma({\rm \bf {a}})$  for all $n\geq 0$, which leads to $b\in A$. Conversely, if $b\notin B$, then there exists $N$ such that $k(0) \preceq  \sigma^{N}({\rm \bf {b}}) \prec \sigma({\rm \bf {b}})$ or $\sigma^{N}({\rm \bf {b}}) \succ \sigma({\rm \bf {a}})$, which indicates $b\notin A$. Hence we have $A=B$. Next we prove $B=C$. For any $b\in B$, we have ${\rm \bf {b}}\in \tilde{S}^{+}_{f}(a,b)$. Let $\epsilon>b$, by the monotonicity of $S^{+}_{f}(H)$ and $f$, $b\in (a,\epsilon)$, and hence $b\notin \tilde{S}^{+}_{f}(a,\epsilon)$. As a result, $\tilde{S}^{+}_{f}(a,b)\neq \tilde{S}^{+}_{f}(a,\epsilon)$ for all $\epsilon>b$, $b\in C$ and $B\subseteq C$. On the other side, given $b\notin B$, then there exits smallest integer $N$ such that $k(0) \preceq  \sigma^{N}({\rm \bf {b}}) \prec \sigma({\rm \bf {b}})$ or $\sigma^{N}({\rm \bf {b}}) \succ \sigma({\rm \bf {a}})$, that is, the first time left the interval $(a,b)$. Similarly to Case 1 above,  this also relates with the definition of self-admissibility in Definition \ref{selfad}. Combining Lemma \ref{xin3} with Proposition \ref{identicalinterval} above, we know that, there exists a plateau $I(b)=(b^{\prime}_{l},b_{r}]\ni b$, such that for any $\epsilon\in I(b)$, $\tilde{S}^{+}_{f}(a,b)=\tilde{S}^{+}_{f}(a,\epsilon)$. As a result, $b\notin B$ will lead to $b\notin C$. Moreover, the right endpoint $b_{r}$ of $I(b)$ belongs to $E_{f}(a)$ since its upper kneading sequence is admissible.
\end{proof}
\end{proposition}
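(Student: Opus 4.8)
The plan is to name the three sets $A=\{b\in[c,1]:\sigma({\rm \bf {b}})\preceq\sigma^{n}({\rm \bf {b}})\preceq\sigma({\rm \bf {a}})\ \forall\, n\geq0\}$, $B=\{b\in[c,1]:{\rm \bf {b}}\in\tilde{S}^{+}_{f}(a,b)\}$ and $C=E_{f}(a)$, and to prove $A=B$, $B\subseteq C$ and $C\subseteq B$. At the outset I would discard the degenerate regime ${\rm \bf {a}}|_{2}=00$ or ${\rm \bf {b}}|_{2}=11$, in which $\tilde{S}^{+}_{f}(a,b)$ and all of its enlargements lie inside $\{0^{\infty},1^{\infty}\}$; a short direct check shows such $b$ belong to none of $A$, $B$, $C$. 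So we may assume ${\rm \bf {a}}|_{2}=01$ and ${\rm \bf {b}}|_{2}=10$, and then Lemma \ref{lemma2} supplies the description $\tilde{S}^{+}_{f}(a,b)=\{w\in\{0,1\}^{\mathbb{N}}:\sigma({\rm \bf {b}})\preceq\sigma^{n}(w)\preceq\sigma({\rm \bf {a}})\ \forall\, n\geq0\}$, where $\sigma({\rm \bf {a}})=k(1)$ in case $a=c$; this is essentially the only place the split $a=c$ versus $a\neq c$ enters.

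With that description, $A=B$ is immediate: putting $w={\rm \bf {b}}$ into it, the membership ${\rm \bf {b}}\in\tilde{S}^{+}_{f}(a,b)$ is word for word the condition defining $A$. For $B\subseteq C$, I would fix $b\in B$ and an arbitrary $\varepsilon>b$. By expansiveness the map $x\mapsto\tau_{f}(x+)$ is strictly increasing on $(c,1]$, so $\tau_{f}(b+)\prec\tau_{f}(\varepsilon+)$, and since both sequences begin with $1$ this forces $\sigma({\rm \bf {b}})\prec\sigma(\tau_{f}(\varepsilon+))$; taking $n=1$ in the description of $\tilde{S}^{+}_{f}(a,\varepsilon)$ then gives ${\rm \bf {b}}\notin\tilde{S}^{+}_{f}(a,\varepsilon)$. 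As ${\rm \bf {b}}\in\tilde{S}^{+}_{f}(a,b)$, the two survivor sets differ, and since $\varepsilon>b$ was arbitrary, $b\in E_{f}(a)=C$.

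The substantive inclusion is $C\subseteq B$, which I would argue contrapositively. If $b\notin B$ then, by the description of $\tilde{S}^{+}_{f}(a,b)$, some shift $\sigma^{N}({\rm \bf {b}})$ violates $\sigma({\rm \bf {b}})\preceq\sigma^{N}({\rm \bf {b}})\preceq\sigma({\rm \bf {a}})$, so in particular $({\rm \bf {b}},{\rm \bf {a}})$ is not weak-admissible. Then Lemma \ref{le3} and Lemma \ref{xin3} produce a weak-admissible pair $(1s,0t)$ with $s$ periodic and $\Omega(1s,0t)=\tilde{S}^{+}_{f}(a,b)$, so that $\tilde{S}^{+}_{f}(a,b)=\tilde{S}^{+}_{f}(a,b')$ for the periodic point $b'$ with $\tau_{f}(b'+)=1s$. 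Applying Proposition \ref{identicalinterval} (if $a=c$) or Proposition \ref{identicalintervaldier} (if $a\neq c$) to $b'$ yields the maximal plateau $I(b)\ni b$ on which $\tilde{S}^{+}_{f}(a,\cdot)$ is constant, whose right endpoint $b_{r}$ has $\tau_{f}(b_{r}+)=1s$ and lies in $E_{f}(a)$. Because $1s$ is self-admissible and lies in $\tilde{S}^{+}_{f}(a,b_{r})$ while ${\rm \bf {b}}$ violates one of the two defining inequalities, ${\rm \bf {b}}\neq 1s$, whence $b<b_{r}$; any $\varepsilon\in(b,b_{r}]$ then satisfies $\tilde{S}^{+}_{f}(a,\varepsilon)=\tilde{S}^{+}_{f}(a,b)$, so $b\notin E_{f}(a)$. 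As a byproduct this also shows $b_{r}\in E_{f}(a)$.

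The step I expect to be the main obstacle is this last one: one must verify, for each subcase of Propositions \ref{identicalinterval} and \ref{identicalintervaldier}, that the plateau genuinely extends strictly to the right of $b$ and that $b$ is never its right endpoint when $({\rm \bf {b}},{\rm \bf {a}})$ fails weak-admissibility, i.e.\ that $b<b_{r}$ holds uniformly across the periodic versus non-periodic and modified versus unmodified alternatives coming out of Lemmas \ref{le3}--\ref{xin3}. The equality $A=B$ and the inclusion $B\subseteq C$, by contrast, are formal consequences of the symbolic description in Lemma \ref{lemma2}.
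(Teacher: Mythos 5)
Your proposal follows essentially the same route as the paper: the same decomposition into $A=B=C$, with $A=B$ read off from the symbolic description in Lemma \ref{lemma2}, $B\subseteq C$ from monotonicity of the kneading sequence of $b$, and $C\subseteq B$ argued contrapositively via Lemmas \ref{le3}--\ref{xin3} and the plateau Propositions \ref{identicalinterval}--\ref{identicalintervaldier}, whose right endpoint $b_r$ satisfies $\tau_f(b_r+)=1s$ and lies strictly to the right of $b$. The only difference is cosmetic (you treat $a=c$ and $a\neq c$ uniformly where the paper splits them, and you make the crux $b<b_r$ explicit), so the argument is correct and matches the paper's.
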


\begin{remark} \label{rem4}By Proposition \ref{pro2}, $b\in E_{f}(a)$ if and only if $\sigma({\rm \bf {b}})$ is admissible, that is, $\sigma({\rm \bf {b}}) \preceq  \sigma^{n}({\rm \bf {b}}) \preceq \sigma({\rm \bf {a}})$  for all $n\geq 0$, only self-admissibility is not enough. Moreover, if $b\notin E_{f}(a)$, and there exists $\epsilon>b$ with $\epsilon\in E_{f}(a)$ and $\tilde{S}^{+}_{f}(a,b)=\tilde{S}^{+}_{f}(a,\epsilon)$.
\end{remark}
\par Our aim is the topological entropy of survivor set $\tilde{S}_{f}(H)$. However, we always consider the set $\tilde{S}^{+}_{f}(H)$ for instead, for the reason that $S^{+}_{f}(H)$ is relative to the Lorenz shift whose entropy can be calculated via kneading determinants. Next we show that $\tilde{S}_{f}(H)$ and $\tilde{S}^{+}_{f}(H)$ have the same entropy. For a subset $\Omega\subseteq\{0,1\}^{\mathbb{N}}$, the topological entropy is defined as
$$
h_{top}(\sigma,\Omega):=\lim_{n\rightarrow \infty}\frac{\log(\#\Omega|n)}{n}=\inf_{n\rightarrow \infty}\frac{\log(\#\Omega|n)}{n},
$$
where the second equality holds for the reason that the sequence $\{\log(\#\Omega|n)\}$ is subadditive. We always write $h_{top}(\sigma,\Omega)$ as $h_{top}(\Omega)$ if we do not want to mention the shift map.

\begin{proposition}\label{pro1}
Let $f\in ELM$ with a hole $H=(a,b)\ni c$. Then $$h_{top}(\tilde{S}_{f}(H))=h_{top}(\tilde{S}^{+}_{f}(H)).$$
\begin{proof}
Denote $(k_{+},k_{-})$ be the kneading invariants of $f$ and $k(0)=\sigma(k_{+})$, $k(1)=\sigma(k_{-})$.
\par Case 1, $a<c<b$, by Lemma \ref{lemma2}, $\tilde{S}_{f}(a,b)\setminus\tilde{S}^{+}_{f}(a,b)\subseteq\{0^{\infty},1^{\infty}\}$. Using the definition of topological entropy, it is clear that $h_{top}(\tilde{S}_{f}(a,b))=h_{top}(\tilde{S}^{+}_{f}(a,b))$.
\par  Case 2, $a=c$, and the case $b=c$ can be proved similarly. Also by Lemma \ref{lemma2}, we know that $\tilde{S}_{f}(c,b)\setminus \tilde{S}^{+}_{f}(c,b)\subseteq \{0^{\infty},1^{\infty}\}\cup\tilde{S}^{1}_{f}(c,b)$ is a countable set. Recall that
\begin{align*}
\ \ \ \ \ \ \ \ \tilde{S}^{1}_{f}(c,b)&= \{w\in\{0,1\}^{\mathbb{N}}: \textup{there \ exists \ sequence} \ \{n_{i}\}_{i\geq1} \ \textup{ such \ that} \ \sigma^{n_{i}}(w)=k_+,\!\\
 &   \textup{and} \ \sigma^{n}(w)\succeq{\rm \bf {b}} \ \textup{for \ all \ other } \ n\geq0  \ \textup{with} \ \sigma^{n}(w)|_{1}=1  \}.\!
\end{align*}
Due to the monotonicity of entropy, we have $ h_{top}(\tilde{S}^{+}_{f}(c,b))\leq h_{top}(\tilde{S}_{f}(c,b))$ since $\tilde{S}^{+}_{f}(H)\subseteq \tilde{S}_{f}(H)$. Next we prove that $h_{top}(\tilde{S}^{+}_{f}(c,b))\geq h_{top}(\tilde{S}_{f}(c,b))$ in four subcases.
\par Case A, $a=b=c$, it is trivial that $\tilde{S}_{f}(c,b)=\tilde{S}^{+}_{f}(c,b)=\Omega(f)$.
\par Case B, $c\notin S_{f}(c,b)$. Since the set $S^{1}_{f}(c,b)$ is a subset of $c$'s preimages, it is clear that $S^{1}_{f}(c,b)=\emptyset$ when $c\notin S_{f}(c,b)$, and hence $h_{top}(\tilde{S}^{+}_{f}(c,b))= h_{top}(\tilde{S}_{f}(c,b))$ by Case 1.
\par Case C, $c\in S_{f}(c,b)$ and $b\notin E_{f}(a)$. By Remark \ref{rem4}, we know that there exists a plateau $I(b)$ such that for any $\epsilon\in I(b)$, $\tilde{S}^{+}_{f}(c,\epsilon)$ is identical. Moreover, the right endpoint of $I(b)$ satisfies $b_{r}\in E_{f}(a)$.
\par Therefore, it suffices to consider Case D, $b\in E_{f}(a)\setminus\{c\}$ and meanwhile $c\in S_{f}(c,b)$. Now we consider the set $\tilde{S}^{1}_{f}(c,b)$ defined above. Fix $m\in \mathbb{N}$, choose $\omega\in\tilde{S}^{1}_{f}(c,b)\setminus\{k_+\}$ and set $\zeta=\omega|_{m}$, then there exists the smallest positive integer $j$ such that $\sigma^{j}(\omega)=k_+$. Let $\nu=\omega|_{j-1}\varepsilon_{j}$, where $\varepsilon_{j}$ denotes the $j$-th symbol of $k_+$. Observe that
\begin{equation}\label{inequ}
{\rm \bf {b}}|_{j-i}\preceq\sigma^{i}(\nu)\preceq k(1)|_{j-i}
\end{equation}
holds for all $i\in\{0,1,\cdots,j-1\}$ with $\sigma^{i}(\nu)|_{1}=1$. Let $i^\ast$ be the smallest integer such that $\sigma^{i^\ast}(\nu)={\rm \bf {b}}|_{j-i^\ast}$, and set $i^\ast =j$ if the left inequality of (\ref{inequ}) holds strictly. By the minimality of integer $i^\ast$, since $b\in E_{f}(a)$ and $\nu|_{i^\ast}{\rm \bf {b}}$ satisfies $\sigma({\rm \bf {b}}) \preceq  \sigma^{n}(\nu|_{i^\ast}{\rm \bf {b}}) \preceq k(1)$  for all $n\geq 0$, we have $\nu\sigma^{j-i^\ast}({\rm \bf {b}})=\nu|_{i^\ast}{\rm \bf {b}}\in \tilde{S}^{+}_{f}(c,b)$. Moreover, $\nu|_{j-1} =\zeta|_{j-1}$ if $j\leq m$, and $\nu|_m =\zeta|_{m}$ if $j\geq m+1$. As a result, we have that for any $\zeta\in (\tilde{S}^{1}_{f}(c,b)\setminus\{k_+\})|_m$, $\zeta|_{j-1}\in\tilde{S}^{+}_{f}(c,b)|_{j-1}$ if $j\leq m$,  and $\zeta|_{m}\in\tilde{S}^{+}_{f}(c,b)|_{m}$ if $j\geq m+1$. Hence
$$ \#\tilde{S}^{1}_{f}(c,b)|_m\leq \sum_{j=1}^{m+1}\#\tilde{S}^{+}_{f}(c,b)|_{j-1}\leq(m+1)\#\tilde{S}^{+}_{f}(c,b)|_{m},
$$
for all $m\geq1$. Since $\tilde{S}_{f}(c,b)\setminus \tilde{S}^{+}_{f}(c,b)\subseteq \{0^{\infty},1^{\infty}\}\cup\tilde{S}^{1}_{f}(c,b)$, we have
$$ \#\tilde{S}_{f}(c,b)_m\leq 2+(m+2)\#\tilde{S}^{+}_{f}(c,b)|_{m}.
$$
Taking logarithms, dividing both side by $m$ and letting $m\rightarrow\infty$, we conclude that $h_{top}(\tilde{S}^{+}_{f}(c,b))\geq h_{top}(\tilde{S}_{f}(c,b))$, which gives the result.
\end{proof}
\end{proposition}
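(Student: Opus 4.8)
The plan is to build on Lemma~\ref{lemma2}, which already confines the difference between the two sets: $\tilde{S}_{f}(H)\setminus\tilde{S}^{+}_{f}(H)\subseteq\{0^{\infty},1^{\infty}\}$ when $a<c<b$, and $\tilde{S}_{f}(H)\setminus\tilde{S}^{+}_{f}(H)\subseteq\{0^{\infty},1^{\infty}\}\cup\tilde{S}^{1}_{f}(H)$ when one endpoint of $H$ equals $c$. Since $\tilde{S}^{+}_{f}(H)\subseteq\tilde{S}_{f}(H)$, monotonicity of $n\mapsto\#\Omega|_{n}$ gives $h_{top}(\tilde{S}^{+}_{f}(H))\le h_{top}(\tilde{S}_{f}(H))$ for free, so the whole issue is the reverse inequality. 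The point that makes this nontrivial is that removing even a \emph{countable} set may change the word-counting entropy $\lim_{n}\frac{1}{n}\log\#\Omega|_{n}=\inf_{n}\frac{1}{n}\log\#\Omega|_{n}$; so what is really needed is a \emph{quantitative} comparison of $\#\tilde{S}_{f}(H)|_{n}$ with $\#\tilde{S}^{+}_{f}(H)|_{n}$, not merely countability of the difference.

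When $a<c<b$ this is immediate: $\#\tilde{S}_{f}(a,b)|_{n}\le\#\tilde{S}^{+}_{f}(a,b)|_{n}+2$, and dividing $\log$ by $n$ and letting $n\to\infty$ yields equality. The real content is the case $a=c$ (the case $b=c$ being symmetric). I would first clear away the degenerate sub-cases: if $a=b=c$ then both entropies equal $h_{top}(\Omega(f))$; if $c\notin S_{f}(c,b)$ then $\tilde{S}^{1}_{f}(c,b)=\emptyset$ (its elements code orbits that reach $c$), so one is back to an estimate of the shape above; and if $c\in S_{f}(c,b)$ but $b\notin E_{f}(a)$, then by Remark~\ref{rem4} and Proposition~\ref{identicalinterval} the point $b$ lies in a plateau $I(b)$ whose right endpoint $b_{r}$ belongs to $E_{f}(a)$ and on which $\tilde{S}^{+}_{f}(c,\cdot)$ — and hence, as the countable $\tilde{S}^{1}$-part obeys the same splicing bound established below, also $h_{top}(\tilde{S}_{f}(c,\cdot))$ — is constant; this reduces us to $b=b_{r}$. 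There remains the essential case $b\in E_{f}(a)\setminus\{c\}$ with $c\in S_{f}(c,b)$.

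In the essential case the target estimate is $\#\tilde{S}^{1}_{f}(c,b)|_{m}\le(m+1)\,\#\tilde{S}^{+}_{f}(c,b)|_{m}$ for every $m$, and it comes from a splicing (``repair'') map. Fix $m$ and a prefix $\zeta=\omega|_{m}$ with $\omega\in\tilde{S}^{1}_{f}(c,b)$; let $j$ be the first time with $\sigma^{j}(\omega)=k_{+}$ and put $\nu=\omega|_{j-1}\varepsilon_{j}$, where $\varepsilon_{j}$ is the $j$-th symbol of $k_{+}$. For the indices $i<j$ with $\sigma^{i}(\nu)|_{1}=1$ one has $\mathbf{b}|_{j-i}\preceq\sigma^{i}(\nu)\preceq k(1)|_{j-i}$; let $i^{\ast}$ be the least such $i$ at which the left inequality is an equality, and set $i^{\ast}=j$ if it is always strict. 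Because $b\in E_{f}(a)$ forces $\mathbf{b}\in\tilde{S}^{+}_{f}(c,b)$, i.e.\ $\sigma(\mathbf{b})\preceq\sigma^{n}(\mathbf{b})\preceq k(1)$ for all $n$, and because of the minimality of $i^{\ast}$, the spliced word $\nu|_{i^{\ast}}\mathbf{b}=\nu\,\sigma^{j-i^{\ast}}(\mathbf{b})$ satisfies all the inequalities defining $\tilde{S}^{+}_{f}(c,b)$ and hence lies in it, while agreeing with $\zeta$ on its first $\min(m,j-1)$ symbols. Thus $\zeta\mapsto(\text{this element},\,j)$ maps $\tilde{S}^{1}_{f}(c,b)|_{m}$ into $\bigcup_{j=1}^{m+1}\bigl(\tilde{S}^{+}_{f}(c,b)|_{j-1}\times\{j\}\bigr)$, so $\#\tilde{S}^{1}_{f}(c,b)|_{m}\le\sum_{j=1}^{m+1}\#\tilde{S}^{+}_{f}(c,b)|_{j-1}\le(m+1)\#\tilde{S}^{+}_{f}(c,b)|_{m}$. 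Combined with Lemma~\ref{lemma2} this gives $\#\tilde{S}_{f}(c,b)|_{m}\le 2+(m+2)\#\tilde{S}^{+}_{f}(c,b)|_{m}$, and taking $\frac{1}{m}\log$ and letting $m\to\infty$ finishes the proof.

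I expect the splicing step to be the real obstacle: one has to verify that cutting $\omega$ at $i^{\ast}$ and grafting the correct shift of $\mathbf{b}$ produces a sequence obeying \emph{every} (two-sided, shift-invariant) inequality that cuts out $\tilde{S}^{+}_{f}(c,b)$ — this is precisely where the hypothesis $b\in E_{f}(a)$, rather than mere self-admissibility of $\sigma(\mathbf{b})$ (cf.\ Remark~\ref{rem4}), together with the earlier reductions, is genuinely used — and that the assignment $\zeta\mapsto(\cdot,j)$ really is at most $(m+1)$-to-one. Once the polynomial bound $\#\tilde{S}^{1}_{f}(c,b)|_{m}\le(m+1)\#\tilde{S}^{+}_{f}(c,b)|_{m}$ is secured, the entropy identity follows mechanically.
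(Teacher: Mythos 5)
Your proposal is correct and follows essentially the same route as the paper: the same reduction to the four subcases (trivial hole, $c\notin S_f(c,b)$, plateau reduction to $b_r\in E_f(a)$ via Remark~\ref{rem4}, and the essential case $b\in E_f(a)$), and the same splicing construction $\nu|_{i^\ast}\mathbf{b}$ yielding the bound $\#\tilde{S}^{1}_{f}(c,b)|_{m}\le(m+1)\#\tilde{S}^{+}_{f}(c,b)|_{m}$ and hence $\#\tilde{S}_{f}(c,b)|_{m}\le 2+(m+2)\#\tilde{S}^{+}_{f}(c,b)|_{m}$. The verification of the splicing step that you flag as the remaining obstacle is carried out in the paper exactly as you sketch it, using the minimality of $i^\ast$ together with $b\in E_f(a)$.
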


With the help of Proposition \ref{pro1}, we are able to calculate $h_{top}(\tilde{S}_{f}(a,b))$ via the entropy of $\tilde{S}^{+}_{f}(a,b)$. However, there exist lots of examples that topological entropy may remain the same  even the survivor set $\tilde{S}^{+}_{f}(a,b)$ changes. Hence we give the definition of  bifurcation set
$$ B_{f}(a):=\{b\in[c,1]:h_{top}(\tilde{S}^{+}_{f}(a,\epsilon))\neq h_{top}(\tilde{S}^{+}_{f}(a,b)) \ {\rm for \ any} \ \epsilon>b\},
$$
then naturally $B_{f}(a)\subseteq E_{f}(a)$. In order to obtain the devil staircase of entropy function, which is stated as Theorem \ref{theorem2}, we only need to prove that $E_{f}(a)$ is a Lebesgue null set.

\vspace{0.3cm}
\noindent {\bf Proof of Theorem \ref{theorem2}}
\par According to the assumption, $f\in ELM$ owns an ergodic acim and $H=(a,b)\ni c$. The ergodicity of $f$ with respect to its invariant measure equivalent to the Lebesgue measure $\mu$ implies that $\mu$-almost every $x\in[0,1]$ is eventually mapped into the interval $(a,c+\frac{1-c}{N})$ for any $N\in\mathbb{N}$, no matter $a=c$ or not. As a result, the survivor sets $S^{+}_{f}(a,b)$ and $S_{f}(a,b)$ are of zero Lebesgue measure. We claim that
$$E_{f}(a)\subseteq \bigcup_{N=1}^{\infty}S^{+}_{f}\bigg(a,c+\frac{1-c}{N}\bigg).$$
By Proposition \ref{pro2}, $E_{f}(a)=\{b\in[c,1]: {\rm \bf {b}}\in \tilde{S}^{+}_{f}(a,b)\}$, and hence $E_{f}(a)\cap [f(b),f(a)]\subseteq S^{+}_{f}(a,b)$ for any $b\in[c,1]$. Let $b=c+\frac{1-c}{N}$, then we have

\begin{align*}
\ \ \ \ \ \ \ \ &\bigcup_{N=1}^{\infty}\bigg(E_{f}(a)\cap\big[f(c+\frac{1-c}{N}),f(a)\big]\bigg)\!\\ &= E_{f}(a)\cap\bigcup_{N=1}^{\infty}\bigg(\big[f(c+\frac{1-c}{N}),f(a)\big]\bigg)\!\\
 &= E_{f}(a)\cap [0,f(a)] \!\\
  &= E_{f}(a)\subseteq  \bigcup_{N=1}^{\infty}S^{+}_{f}\bigg(a,c+\frac{1-c}{N}\bigg). \!
\end{align*}
By subadditivity of the Lebesgue measure, it follows that $E_{f}(a)$ is a Lebesgue null set. Combining this with Proposition \ref{continuity}, we obtain that the entropy function $\lambda_{f}(a):b\mapsto h_{top}(\tilde{S}_{f}(H))$ is a devil staircase, where $H=(a,b)$ and $a$ is fixed. More precisely, $\lambda_{f}(a)$ is decreasing and constant for Lebesgue almost everywhere.          $\hfill\square$

\section{Final comments}\label{comments}

 \par Although we extend the devil staircases in \cite{Urbanski1987,kalle2020,Langeveld2023} to $f\in ELM$ with a hole at critical point $c$, it is still unknown how to deal with the cases that $H\subsetneqq(0,c)$ and $H\subsetneqq(c,1)$, because the tools from Lorenz shift can not be used directly. We only know that, at the special case that the preimage of $(a,b)$ ($a\leq c \leq b$) is unique and $(d,e)=f^{-1}(a,b)$ is the proper subset of $(0,c)$ or $(c,1)$, $S_{f}(a,b)=S_{f}(d,e)$.
\par Indeed, the devil staircase of entropy function is proved via the bifurcation set $E_{f}(a)$, for the reason that $B_{f}(a)\subseteq E_{f}(a)$. A natural question arises: at which case the two bifurcation sets coincide?  When $f$ being $T_\beta$ ($\beta\in(1,2]$) with a hole $(0,t)$, it was proved by Baker and Kong \cite{baker2020} that, two bifurcation sets coincide at the case $\beta$ being multinacci numbers. Hence, at the case $f\in ELM$ and $c\in H$, can we give a sufficient and necessary condition for the equivalence of sets $B_{f}(a)$ and $ E_{f}(a)$?

\section*{Appendix}\label{Some examples}

\begin{example}\label{countableset} (Characterization of $\tilde{S}^{0}_{f}(0,f(b))$ and $\tilde{S}^{1}_{f}(c,b)$ in the proof of Theorem \ref{xinthm1})
\
\par Let $f\in ELM$ with its kneading invariants being $(k_+,k_-)=((100)^\infty,01^\infty)$. Here we consider two kinds of holes $H_{1}=(c,b)$ and $H_{2}=(0,f(b))$ with different parameters $b$, which leads to the difference between sets $\tilde{S}^{0}_{f}(0,f(b))$ and $\tilde{S}^{1}_{f}(c,b)$.
\begin{enumerate}
\item  Let ${\rm \bf {b_1}}=(10)^\infty$ be the upper kneading sequence of $b_1$. It can be seen that $\sigma({\rm \bf {b_1}})=(01)^\infty$ and $1$ is the fixed point of $f$. By the proof of Theorem \ref{xinthm1}, $k(0)\in\tilde{S}^{0}_{f}(0,f(b_1))$ if and only if $\sigma^{n}(k(0))\succeq \sigma({\rm \bf {b_1}})$ for all $n\geq1$, hence $\tilde{S}^{0}_{f}(0,f(b_1))=\emptyset$ for $\sigma^{1}(k(0))\prec (01)^\infty$. As for the set $\tilde{S}^{1}_{f}(c,b_1)$, it can be checked that $k_{+}\in\tilde{S}^{1}_{f}(c,b_1)$ and hence a countable subset of $k_+$'s preimages. Hence $S_{f}(c,b_1)\setminus S_{f}(0,f(b_1))$ is countable.

\item  Let ${\rm \bf {b_2}}=(10010)^\infty$ be the upper kneading sequence of $b_2$ and $\sigma({\rm \bf {b_2}})=(00101)^\infty$. It can be checked that $k(0)\in\tilde{S}^{0}_{f}(0,f(b_2))$. According to the proof of Theorem \ref{xinthm1}, we obtain $\tilde{S}^{0}_{f}(0,f(b_2))=\tilde{S}^{1}_{f}(c,b_2)$ and $S_{f}(0,f(b_2))=S_{f}(c,b_2)$.
\end{enumerate}

\end{example}

\begin{example}\label{consturcition of s,t}(Construction of weak-admissible $(1s,0t)$)
\
\par For simplicity, here we consider $f=T_{2}$, hence $\Omega(f)$ is the full shift $\{0,1\}^\mathbb{N}$. Let $H=(a,b)$ be the hole, denote $\sigma({\rm \bf {b}})=(0101111010)^\infty$ and $\sigma({\rm \bf {a}})=(111001011110)^\infty$. Next we construct weak-admissible $(1s,0t)$ such that $\Omega(1s,0t)=\tilde{S}^{+}_{f}(a,b)$. By Lemma \ref{le3}, we obtain self-admissible sequences $s_{1}=(0101111)^\infty$ and $t_{1}=(1110010)^\infty$ such that $\Omega(1s_1,0t_1)=\tilde{S}^{+}_{f}(a,b)$. Using Lemma \ref{xin3}, we have $s=s_{2}=(011)^\infty$ since $\sigma^{3}(s_{1})\succ t_1$. Observe that $\sigma^{3}(t_{1})\prec s_2$, then we have $t=t_{2}=(110)^\infty$. As a result, $\tilde{S}^{+}_{f}(a,b)=\Omega(1s,0t)=\Omega((101)^\infty,(011)^\infty)=\{(101)^\infty,(011)^\infty,(110)^\infty\}$.

\end{example}

\begin{example}\label{identical1}(Plateau of case $(c,b)$)
\
\par   Let $f\in ELM$ with a hole $(a,b)$ and its kneading invariants being $(k_+,k_-)$. Denote $k(1)=\sigma(k_-)$ and  $k(0)=\sigma(k_+)$. There are six cases in the proof of Proposition \ref{identicalinterval}, here we show the differences of interval $I(b)$.
\begin{enumerate}
\item  Let $(k_+,k_-)=(10^\infty,(011001)^\infty)$, $H_{1}=(c,b_1)$ with ${\rm \bf {b_1}}=(100)^\infty$ and $H_{2}=(c,b_2)$ with ${\rm \bf {b_2}}=(1001000)^\infty$.  It can be seen that $\Omega({\rm \bf {b_1}},k_-)$ is weak-admissible. By Case 1 in Proposition \ref{identicalinterval}, we have that $I(b_1)=[b_{l},b_r]$, where $\tau_{f}(b_{l}+)=1001k(0)=100k_+=10010^\infty$ and $\tau_{f}(b_{r}+)={\rm \bf {b_1}}=(100)^\infty$. For each $\epsilon \in I(b_1)$, $\tilde{S}^{+}_{f}(c,\epsilon)=\Omega((100)^\infty,(011001)^\infty)$.
And $I(b_1)$ is the maximal interval such that the survivor set $\tilde{S}^+_{f}(c,b_1)$ is identical. As for the hole $(c,b_2)$, $\Omega({\rm \bf {b_2}},k_-)$ is not self-admissible and only ${\rm \bf {b_2}}$ need to be modified. By Case 2 in Proposition \ref{identicalinterval}, we have that $I(b_2)=I(b_1)$.

\item  Let $(k_+,k_-)=(10^\infty,(0110010)^\infty)$ and $H_{3}=(c,b_{3})$ with ${\rm \bf {b_3}}=(10)^\infty$.  It can be seen that $\Omega({\rm \bf {b_3}},k_-)$ is not weak-admissible, by Lemma \ref{xin3}, there exists weak admissible $(1s,0t)$ such that $\Omega({\rm \bf {b_3}},k_-)=\Omega(1s,0t)$, where $s=(01)^\infty$ and $t=(10)^\infty$. Next we consider the sequence $\eta=00(10)^\infty$. According to Subcase 1 in Proposition \ref{identicalinterval}, $\eta\succeq \sigma^{2}(\sigma(k_-))$, hence $I(b_3)=(b_{l},b_r]$, where $\tau_{f}(b_{l}+)=100t=10(01)^\infty$ and $\tau_{f}(b_{r}+)=(10)^\infty$. Moreover, $I(b_3)$ is the maximal interval such that the survivor set $\tilde{S}^+_{f}(c,b_3)$ is identical, and the left endpoint $b_{l}$ can not be reached.

\item  Let $(k_+,k_-)=(10^\infty,(0111001000)^\infty)$ and $H_{4}=(c,b_{4})$ with ${\rm \bf {b_4}}=(100)^\infty$.  It can be seen that $\Omega({\rm \bf {b_4}},k_-)$ is not weak-admissible, and we can find $s=(001)^\infty$ and $t=(110)^\infty$ such that $\Omega({\rm \bf {b_4}},k_-)=\Omega(1s,0t)$. Denote $\eta=000(110)^\infty$ and we have
    $$\eta=000(110)^\infty\prec \sigma^{3}(\sigma(k_-))=(0010000111)^\infty)\prec s=(001)^\infty.$$
    By Subcase 2 in Proposition \ref{identicalinterval}, $I(b_4)=(b_{l},b_r]$, where $\tau_{f}(b_{l}+)=1001k(0)=10010^\infty$ and $\tau_{f}(b_{r}+)=(100)^\infty$.

\item  Let $(k_+,k_-)=(10^\infty,(01110010011)^\infty)$ and $H_{5}=(c,b_{5})$ with ${\rm \bf {b_5}}=(10010)^\infty$.  It can be seen that $\Omega({\rm \bf {b_5}},k_-)$ is not weak-admissible, and we can find $s=(00101)^\infty$ and $t=(110)^\infty$ such that $\Omega({\rm \bf {b_5}},k_-)=\Omega(1s,0t)$. Denote $\eta=00100(110)^\infty$ and we have
    $$\eta=00100(110)^\infty\prec \sigma^{3}(\sigma(k_-))=(00100110111)^\infty)\prec s=(00101)^\infty.$$
    Moreover, $\sigma^{3}(\sigma(k_-))$ is self-admissible.
    By Subcase 3 in Proposition \ref{identicalinterval}, $I(b_5)=(b_{l},b_r]$, where $\tau_{f}(b_{l}+)=1\sigma^{3}(\sigma(k_-))$ and $\tau_{f}(b_{r}+)=(10010)^\infty$.

\item  Let $(k_+,k_-)=(10^\infty,(0111000111000)^\infty)$ and $H_{6}=(c,b_{6})$ with ${\rm \bf {b_6}}=(100)^\infty$.  It can be seen that $\Omega({\rm \bf {b_6}},k_-)$ is not weak-admissible, and we can find $s=(001)^\infty$ and $t=(110)^\infty$ such that $\Omega({\rm \bf {b_6}},k_-)=\Omega(1s,0t)$. Denote $\eta=000(110)^\infty$ and we have
    $$\eta=000(110)^\infty\prec \sigma^{3}(\sigma(k_-))=(0001110000111)^\infty)\prec s=(001)^\infty.$$
    Moreover, $\sigma^{3}(\sigma(k_-))$ is not self-admissible.
    By Subcase 4 in Proposition \ref{identicalinterval}, $I(b_6)=[b_{l},b_r]$, where $\tau_{f}(b_{l}+)=1000111k(0)$ and $\tau_{f}(b_{r}+)=(100)^\infty$.
\end{enumerate}
\end{example}

\begin{example}\label{identicaltwosets}(The set $I(a)\times I(b)$)
\
\par Here we consider $f=T_{2}$, hence $c=1/2$ and $\Omega(f)$ is the full shift $\{0,1\}^\mathbb{N}$. Let $H=(a,b)\ni c$ be the hole, denote $\sigma({\rm \bf {b}})=(01)^\infty$ and $\sigma({\rm \bf {a}})=(110)^\infty$. We calculate $I(a)$ and $I(b)$ separately. By the proof of Proposition \ref{identicalintervaldier}, we obtain that $I(b)=(b_{l},b_{r}]$, where $\tau_{f}(b_{l}+)=100\sigma({\rm \bf {a}})=100(110)^\infty=10(011)^\infty$ and $\tau_{f}(b_{r}+)={\rm \bf {b}}=(10)^\infty$.
\par As for the interval $I(a)$, similarly, we also obtain that $I(a)=[a_{l},a_{r})$, where $\tau_{f}(a_{l}-)={\rm \bf {a}}=(011)^\infty$ and $\tau_{f}(a_{r}-)=011\sigma({\rm \bf {b}})=011(10)^\infty$. Moreover, we denote $\tau_{f}(b^\prime_{l}+)=1010^\infty$ and $\tau_{f}(a^\prime_{r}-)=01101^\infty$.
Let $A=(b_l,b_r]$ and $B=[a_l,a^\prime_r]$, we can see that, for any $(\varepsilon,\eta)\in A\times B$, we have
$$\tilde{S}^{+}_{f}(\varepsilon,\eta)=\Omega((10)^\infty,(011)^\infty),$$
where $\Omega((10)^\infty,(011)^\infty)=\{ w \in \{ 0, 1\}^{\mathbb{N}} : (01)^\infty\preceq  \sigma^{n}(\omega) \preceq (110)^\infty\ \forall \, n \in \mathbb{N}_{0} \}.$ Similarly, it can be verified that the set $[b^\prime_l,b_r]\times[a_l,a_r)$ is also a plateau set. However, we can not extend to plateau set to $I(a)\times I(b)$ since we have a counterexample in the set $(b^\prime_l,b_l)\times(a^\prime_r,a_r)$. Let $\kappa \in (b^\prime_l,b_l)$ and $\lambda \in(a^\prime_r,a_r)$ with $\tau_{f}(\kappa_+)=(10011)^\infty$ and $\tau_{f}(\lambda_-)=(01110)^\infty$, then the survivor set $\tilde{S}^{+}_{f}(\kappa,\lambda)\neq\tilde{S}^{+}_{f}(a,b)$.

\end{example}

\end{document}